\documentclass[10 pt, twocolumn]{article}
\usepackage{amsmath, amssymb,amsfonts, amsthm,caption, subcaption}
\usepackage{graphicx}
\usepackage{array}
\usepackage{diagbox}
\usepackage{color}


\makeatletter
\let\@fnsymbol\@arabic
\makeatother

%

%
%
%
%

\newcommand{\R}{\mathbb{R}}

\newcommand{\Z}{\mathbb{Z}}

\newcommand{\Mod}[1]{\ (\text{mod}\ #1)}

\newtheorem{thm}{Theorem}
\newtheorem{definition}{Definition}
\newtheorem{lemma}{Lemma}

\newtheorem{conjecture}{Conjecture}
\newtheorem{remark}{Remark}

\DeclareMathOperator{\sgn}{\mathrm{sgn}}

\textwidth=17 cm

\oddsidemargin =-0.3 cm

\title{On a very steep version of the standard map.}
\author{M. Arnold\thanks{University of Texas at Dallas, Richardson, TX, 
USA}\and T. 
Dauer\thanks{Indiana University, Bloomington, IN, USA} \and M. 
Doucette\thanks{University of Chicago, Chicago, IL, USA} \and S.-C. 
Wolf\thanks{Universit\'{e} Paris-Saclay, Paris, France}}

\begin{document}

	\maketitle

		We consider the long time behavior of the trajectories of the 
		discontinuous 
		analog of the standard Chirikov map. We prove that for some values of 
		parameters all the trajectories remains bounded for all time. For other 
		set of parameters we provide an estimate for the escape rate for the 
		trajectories and present a numerically supported 
		conjecture for the 
		actual escape rate.

	\section{Introduction.}
	
We consider the area-preserving transformation of the cylinder 
	$[0,1) \times \R $ defined by $f(x,y)=(x',y')$ where
	
	\begin{equation}
	\begin{cases}
		\label{eq:original map}
		x' &= x + \alpha y \Mod{1} \\
		y' &= y + \sgn \left(x'-\frac{1}{2}\right), \\
\end{cases}
	\end{equation}
	Parameter $\alpha \in \R$ is called the twist parameter. A point 
	at position $(x,y)$ on the cylinder moves at constant height $y$ around 
	the cylinder a distance $\alpha y$, and then moves up one unit if it is on 
	the right half of the cylinder ($x' \in (1/2,1)$), down one unit if it is on 
	the left half ($x' \in (0,1/2)$), and stays at the same vertical position if it 
	is at the singular lines $x'=1/2$ or $0$.
	
	Such system can be regarded as a discontinuous analog of the standard 
	Chirikov map (see \cite{CHIRIKOV1979}), where the smooth function 
	$\sin(x')$ 
	is replaced by the discontinuous $\sgn(x')$. This system can be also 
	obtained from the Fermi-Ulam accelerator model with the sawtooth-like 
	wall movement regime (see \cite{arnold2015} for details).  For the 
	smooth variants of the 
	described problems KAM-technique can be used to provide the existence 
	of the invariant curves separating the phase space and so no unbounded 
	orbit exists for such systems. Since  transformation \eqref{eq:original 
	map} 
	is  discontinuous, KAM theory is not applicable and so new methods are 
	needed for the analysis. Such systems having many interesting 
	dynamical properties, attracted a lot of attention in the past few years 
	(see e.g. \cite{icmp_dima}, \cite{DolSim}). 
	
	In this note we study the asymptotic properties of the orbits of system 
	\eqref{eq:original map} in terms of the growth rate of the height $y_n$ of 
	the iterates $(x_n,y_n)=f^n(x_0,y_0)$. We will focus on the rational 
	values of the twist parameter $\alpha$. The case of the irrational values 
	of $\alpha$ is 
	more difficult and will be a subject of a future work.

	In the next section, we collect preliminary results on the structure of the 
	set of orbits of system \eqref{eq:original map} and relate our system to  
	a transformation on a finite lattice. In section \ref{sec:Main} we present 
	our main results and state some conjectures 
	based on the numerical simulations. Section \ref{sec:periodic} is devoted 
	to the numerical study of the periodic orbits. 
	
	\paragraph{Acknowledgments.} Present work was done during the  
	Summer@ICERM research program in 2015. Authors are deeply thankful 
  to ICERM 
	and Brown University for the hospitality and highly encouraging 
	atmosphere. Authors also want to thank 
	Vadim Zharnitsky and Stefan Klajbor-Goderich for deep and fruitful 
	discussions.

	\section{Preliminaries.}
	We will use the following notations. 
	Integer part of $x$ is denoted as $\lfloor x\rfloor$, therefore for the 
	fractional part of $x$ we have $\{x\}=x-\lfloor x\rfloor$.  
	$\Z_q=\{0,1,\dots,q-1\}$ will denote the ring of residues modulo $q$.
	\begin{lemma}
		\label{lm:symmetry}
		 Map \eqref{eq:original map} is symmetric with respect to 
		the point 
		$(1/2,0)$. In details, for any 
		two points $(x,y)$ and $(\tilde{x},\tilde{y})$ such that 
		$(x,y)+(\tilde{x},\tilde{y})=(1,0)$ one has 
		$f(x,y)+f(\tilde{x},\tilde{y})=(1,0)$ (See Fig. \ref{fig: IET}).
	\end{lemma}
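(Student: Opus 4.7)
The plan is direct substitution. I set $\tilde x = 1-x$ and $\tilde y = -y$, then compute the two components of $f(\tilde x,\tilde y)=(\tilde x',\tilde y')$ separately, aiming to show $\tilde x' = 1 - x'$ and $\tilde y' = -y'$. Once both identities are in hand, the sum formula $f(x,y)+f(\tilde x,\tilde y)=(1,0)$ follows immediately.

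For the first coordinate, the key point is to handle the mod-$1$ reduction cleanly. I write $x + \alpha y = n + x'$ with $n\in\Z$ and $x'\in[0,1)$, and observe that
\[
\tilde x + \alpha \tilde y \;=\; (1-x) + \alpha(-y) \;=\; -n + (1 - x').
\]
If $x'\in(0,1)$, then $1-x'\in(0,1)$, so the reduction modulo $1$ yields $\tilde x' = 1 - x'$ directly. The marginal case $x'=0$ lies on one of the singular lines already flagged in the setup and may be excluded (the same is true for the compatibility case $x=\tilde x = 1/2$).

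For the second coordinate, once $\tilde x' = 1 - x'$ is established, I use only the odd symmetry of $\sgn$ about $0$:
\[
\tilde y' \;=\; -y + \sgn\bigl((1-x') - \tfrac12\bigr) \;=\; -y - \sgn\bigl(x' - \tfrac12\bigr) \;=\; -\bigl(y + \sgn(x' - \tfrac12)\bigr) \;=\; -y'.
\]
Summing the two iterates then gives $(x',y')+(\tilde x',\tilde y') = (1,0)$, as desired.

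There is no substantive obstacle; the only care needed is the boundary behaviour of the mod-$1$ map at $x'=0$, which falls inside the singular set of the dynamics and so does not affect the conclusion. A sentence at the start restricting attention to points off the singular lines $x=0,\,1/2$ should keep the write-up entirely routine.
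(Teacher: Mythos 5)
Your proof is correct and follows essentially the same route as the paper's: direct substitution of $(\tilde x,\tilde y)=(1-x,-y)$, showing $\tilde x'=1-x'$ and then $\tilde y'=-y'$ from the oddness of $\sgn$. Your extra care with the mod-$1$ reduction and the singular set $x'=0,\,1/2$ is a minor refinement the paper's one-line proof simply glosses over.
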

	\begin{proof}
	  Let $(\tilde{x},\tilde{y}) = (1-x,-y)$
	  , then $x' = x+\alpha y $  and
	  $\tilde{x}'= 1-x-\alpha y=1-x'$. Hence $\tilde{y}' -\tilde{y}=-(y'-y)$.
\end{proof}

\begin{figure}[hbt]
	\centering
	\includegraphics[width=0.35\textwidth]{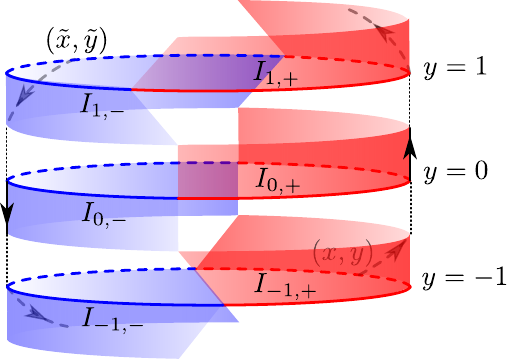}
	\caption{Transformation \eqref{eq:original map} can be 
	thought as 
	interval 
	exchange transformation with infinitely many intervals. Intervals $I_{j,+}$ 
	are the pre-images of the segments $x\in (1/2,1),\, y=y_0+(j+1)$. 
	Similarly, 
	$I_{j,-}$ are the pre-images of $x\in (0,1/2), y=y_0+(j-1)$. }\label{fig: 
	IET}
\end{figure}

Next we notice that the transformation \eqref{eq:original map} preserves 
the lattice $y\in \{y_0+\Z\}$ in the second coordinate. Let 
$\alpha=\dfrac{p}{q}$, where $p,q\in\Z$ and $\gcd(p,q)=1$. Then for two 
points $(x,y)$ and 
$(x,y\pm q)$ first 
components of their images coincide and thus the increments in the 
second 
components are equal. Therefore we can restrict our attention to the set 
$(x,y)\in [0,1)\times\{y_0+\Z_q\}$. Transformation \eqref{eq:original 
map} can be regarded as an interval exchange transformation on the union 
of $2q$ intervals $\bigcup_{j=1}^q (I_{j,+}\cup I_{j,-})$, where 
$I_{j,-}=\{(x,y): \{qx+p y<q/2\}, y=y_0+j\}$ and $I_{j,+}=\{(x,y): 
\{qx+p y>q/2\}, y=y_0+j\}$ (see Fig. \ref{fig: IET}) 

If one considers the special case $q=1$ dynamics of the 
system 
\eqref{eq:original map} degenerates to 
\begin{equation*}\label{eq:Ralston}
\begin{cases}
x' = x+ py_0\Mod 1 \\
j' = j + \sgn\left(x'-\frac{1}{2}\right)  
\end{cases}
\end{equation*} In this particular case dynamic of the 
first coordinate became independent from the second coordinate. 
Therefore one can identify all the intervals $I_{j,+}=I_+$ and all the 
intervals 
$I_{j,-}=I_-$.  
For  $py_0=1$ one immediately obtain linearly 
growing trajectory $f^n(1/4,y_0)=(1/4, n+y_0)$. On the other hand for 
$py_0=1/2$ any trajectory remains bounded since 
for $x\in I_+$ from lemma \ref{lm:symmetry} it follows that $x'\in I_-$. 
The case of $y_0$ 
being irrational has been extensively studied (see 
\cite{discrep, Kesten, Ralston,Avila}) It provides a 
random-like behavior of 
the trajectories depending on the arithmetic properties of the initial 
condition $y_0$.

In this paper we address the case $q>1$ and consider rational initial  
conditions $y_0=\dfrac{a}{b}$. Using substitution $y=y_0+j$, we rewrite 
transformation \eqref{eq:original map} as

\begin{equation}\label{eq:prebands}
\begin{cases}
x' = x+ \frac{p(a+bj)}{bq}\Mod 1 \\
j' = j + \sgn\left(x'-\frac{1}{2}\right)
\end{cases}
\end{equation} 
where $j'$ is defined by the expression $y'=y_0+j'$.

	\begin{lemma}
		\label{lm:torus}
		Trajectories of the system \eqref{eq:prebands} are organized in  
		bands: for $(x,j)$ and $(\tilde{x}, j)$ such that 
		$\lfloor bqx \rfloor=\lfloor bq\tilde{x}\rfloor$ it 
		follows that  $\lfloor bqx'\rfloor=\lfloor 
		bq\tilde{x}'\rfloor$. 
	\end{lemma}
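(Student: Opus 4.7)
The plan is to exploit the arithmetic compatibility between the $x$-increment $\beta := p(a+bj)/(bq)$ in \eqref{eq:prebands} and the band width $1/(bq)$: the increment is exactly an integer multiple of the band width, so each band is sent rigidly to another band without being split by the modular reduction.

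First I would fix the band index $k = \lfloor bqx \rfloor \in \{0, 1, \dots, bq-1\}$ and write $x = k/(bq) + \epsilon$ with $\epsilon \in [0, 1/(bq))$. The key observation is that $bq\cdot\beta = p(a+bj)$ is an integer, so $x + \beta = (k + p(a+bj))/(bq) + \epsilon$ sits on the translated $1/(bq)$-lattice with the same fine offset $\epsilon < 1/(bq)$ that $x$ had.

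Next I would perform the reduction modulo $1$. Setting $r = (k + p(a+bj)) \bmod bq$, one can write $x + \beta = N + r/(bq) + \epsilon$ for some integer $N$; since $\epsilon < 1/(bq)$, the fractional part lies cleanly inside a single band $[r/(bq), (r+1)/(bq))$, giving $x' = r/(bq) + \epsilon$ and hence $\lfloor bqx' \rfloor = r$. The integer $r$ depends only on $k$ and $j$, not on $\epsilon$. Applying the same computation to $\tilde x$, with the same $k$ and $j$, produces the same $r$, which is exactly the statement of the lemma.

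I do not expect any serious obstacle. The only subtle point is to check that the translated band $[k/(bq), (k+1)/(bq)) + \beta$ is not cut in two by an integer during the mod-$1$ reduction; this is automatic because its left endpoint lies on the lattice $(1/(bq))\Z$ and its length is $1/(bq)$, so at most one integer lies in its closure, and that integer (if any) coincides with the left endpoint, leaving the interior inside a single band.
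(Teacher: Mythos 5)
Your proof is correct and is essentially the paper's own argument: the paper's one-line proof observes that the integer parts of $bqx$ and $bq\tilde{x}$ are both shifted by the same integer $p(a+bj)$, which is exactly what your decomposition $x=k/(bq)+\epsilon$ makes explicit. Your added check that the mod-$1$ reduction cannot split a band is a fair point the paper leaves implicit, but it does not change the route.
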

	\begin{proof}
		Obviously, integral parts of $xbq$ and $\tilde{x}bq$ are changed by 
		the transformation \eqref{eq:prebands} by the same amount
		$p(a+bj)$. 
		\end{proof}

From lemma \ref{lm:torus} it follows that we can restrict our attention on 
the single representatives from the classes of equivalent trajectories and 
consider our transformation on the discrete torus $(x,y)\in \Z_{bq}\times 
\Z_q$. For the sake of simplicity we will use the following lattices:

	\begin{equation*} 
	L_e= \left\{ \left(\frac{2+4r}{4bq}, \frac{a}{b}+j\right),~ j \in 
	\Z_q,~ r \in \Z_{bq}\right\} \end{equation*}
	
	\[ L_o = \left\{ \left(\frac{3+4r}{4bq},\frac{a}{b}+j\right),~ j \in 
	\Z_q,~ r \in\Z_{bq} \right\} \]
	
We refer to $L_e$ and $L_o$ as the even and the odd lattice, 
	respectively. $L_o$ can be obtained from $L_e$ by shifting to the right 
	by $\dfrac{1}{4bq}$ (see Fig. \ref{fig: lattices}).
	
	\begin{figure}
	\centering 
	\includegraphics[width=0.35\textwidth]{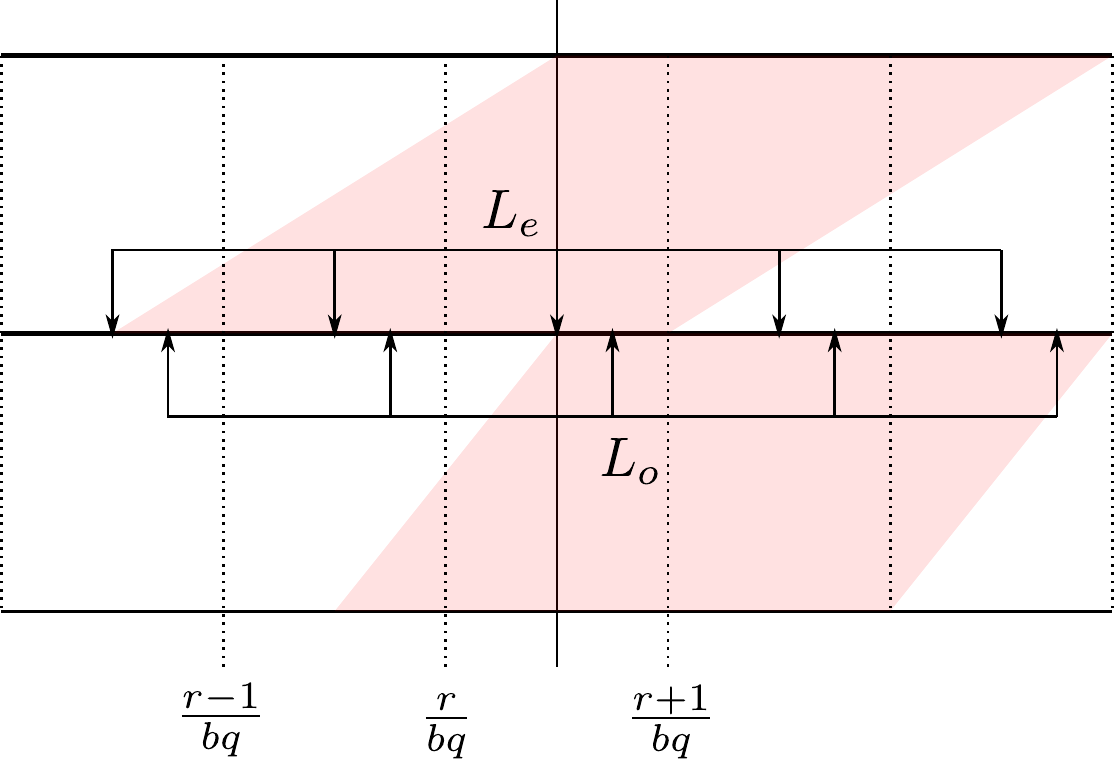}
	\caption{Lattices $L_e$ and $L_o$ for odd $bq$. }\label{fig: lattices}
	\end{figure}

Thanks to lemma \ref{lm:torus}  these lattices are invariant 
under the action of $f$. 
To simplify the notations, henceforth when $bq$ is even we consider $f:L_e 
\rightarrow L_e$, and when $bq$ is odd we consider $f: L_o \rightarrow 
L_o$. For purposes of calculation we will  think of $f$ as acting on $(r,j)$ 
instead of $(x,y)$. 
Explicitly, for $r \in \Z_{bq}$ and $j \in \Z_q$ we 
have
\begin{equation}
\label{eq:f with r}
\begin{cases}
r' = r + p(a+bj) &\Mod{bq} \\
j' = j + \sgn(2r'-bq+1+\delta)&\Mod {q}, 
\end{cases}
\end{equation}
where $\delta=bq\Mod{2}$ refers to our choice of the lattice 
$L_e$ or 
$L_o$.

	\section{Main Results. }
	\label{sec:Main}
	Since the lattices $L_e$, $L_o$ are finite all the trajectories of the system 
	\eqref{eq:f with r} are periodic.  The total increment in the 
	second coordinate of any periodic trajectory has to be proportional to 
	$q$. If the total increment of a trajectory is zero, we will call such a 
	trajectory bounded or periodic. Otherwise the trajectory will be called 
	escaping. 
\subsection{Existence of escaping trajectories.}
	\begin{thm} \label{thm: periodic for q even} Let 
	$\alpha=\frac{p}{q}$ 
	and 
	$y_0=\frac{a}{b}$ be two rational numbers satisfying the condition 
	$\lfloor 
	bq/2\rfloor = p 
	a\Mod{b}$. Then 
		\begin{enumerate}
			\item For $bq$ even, any orbit of the transformation 
			\eqref{eq:original map} starting at the level $y=y_0$ is 
			bounded.
			\item For $bq$ odd there exists a unique class of equivalent 
			trajectories of the system \eqref{eq:original map} starting at the 
			level $y=y_0$ and growing without bounds.
			\end{enumerate}
			\end{thm}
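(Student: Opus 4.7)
The plan is to translate the central symmetry of Lemma~\ref{lm:symmetry} into the lattice coordinates $(r,j)\in\Z_{bq}\times\Z_q$ of equation~\eqref{eq:f with r}. First I reformulate boundedness: an orbit is bounded iff its integer height drift
\[ J_O=\sum_{m=0}^{T-1}\sgn\!\bigl(2r_{m+1}-bq+1+\delta\bigr) \]
over one lattice period $T$ vanishes, and it escapes linearly iff $J_O$ is a nonzero multiple of $q$. The strategy is to build a discrete involution of the lattice that flips the sign of each summand of $J_O$ and then analyze the orbit structure it induces.

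Model the candidate on $(x,y)\mapsto(1-x,-y)$ by setting $\iota(r,j)=(bq-1-\delta-r,\,c-j)$. A direct check shows that $\iota\circ f=f\circ\iota$ forces the shift $A:=bq-1-\delta$ in the $r$-coordinate (so that $2r'-bq+1+\delta$ flips sign under $r'\mapsto A-r'$ whenever $A-r'$ stays in the canonical range $\{0,\dots,bq-1\}$) together with the Diophantine condition $p(bc+2a)\equiv 0\pmod{bq}$ for $c$. Solvability for $c\in\Z_q$ is equivalent to $b\mid 2pa$, and $2pa\equiv 2\lfloor bq/2\rfloor\pmod b$ gives this divisibility precisely when $bq$ is even. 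Thus the hypothesis is exactly what makes $\iota$ globally commute with $f$ in case~1; for $bq$ odd, the same shift forces mod-reduction only at $r'=bq-1$, and a short calculation shows $\iota$ then fails to commute with $f$ at exactly the $q$ preimages of this value, with defect $(0,-2)$ in the $j$-coordinate.

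For case~1, $\iota$ has no lattice fixed points (the equation $2r=bq-1$ is unsolvable), so each $f$-orbit is either $\iota$-invariant --- in which case $\iota$ acts as a half-period shift pairing the summands of $J_O$ into opposite-signed couples and forcing $J_O=0$ --- or is paired with a distinct $\iota$-image orbit of opposite $J$. To upgrade this to $J_O=0$ on every orbit, I would descend to a single residue class of $r\bmod b$ (preserved because $r'\equiv r+pa\pmod b$), on which the induced dynamics is a smaller $b=1$ instance of the same problem and the argument can be iterated. For case~2, orbits disjoint from the exceptional set inherit $J_O=0$ by the same reasoning, while the total drift $\sum_O J_O=q$ --- which comes from the one-sided asymmetry of $\sgn(2r'-bq+2)$ (positive on $(bq+1)/2$ values, negative on $(bq-1)/2$) summed over the $q$ choices of $j$ --- must be carried by orbits meeting the exceptional set, and a careful tally of the defect contributions along each such orbit should show it concentrates on a single one, giving the unique escaping class up to the band equivalence of Lemma~\ref{lm:torus}. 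The main obstacles are the recursive descent in case~1 (closing the gap between ``$\iota$ pairs orbits'' and ``$J_O=0$ on every orbit'') and the orbit-level defect tally in case~2 that pinpoints the sole carrier of the drift.
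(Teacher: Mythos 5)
Your involution $\iota(r,j)=(bq-1-\delta-r,\,c-j)$ is a correct lattice transcription of Lemma~\ref{lm:symmetry}, and your computation that the sign $\sgn(2r'-bq+1+\delta)$ flips exactly under $r'\mapsto bq-1-\delta-r'$ is right; the counting observation $\sum_O J_O=q$ for $bq$ odd is also correct and gives \emph{existence} of an escaping orbit in case~2. But the core of the argument has a genuine gap, which you partly acknowledge: a drift-reversing symmetry can only show that escaping orbits occur in pairs of opposite drift (or are self-paired with zero drift). It is logically consistent with the existence of one orbit drifting up and its mirror image drifting down, so it cannot by itself yield case~1, nor uniqueness in case~2. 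The proposed repairs do not close this: the ``recursive descent'' to a residue class of $r\bmod b$ reduces to a $b=1$ instance of the \emph{same} problem, where the pairing argument is still insufficient; and the ``defect tally'' in case~2 is not carried out and it is unclear how it would single out one orbit.

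The paper's proof rests on a different, and essential, idea: a \emph{barrier level}. Using the hypothesis $\lfloor bq/2\rfloor\equiv pa\pmod b$ one solves $p(a+bj_*)\equiv\lfloor bq/2\rfloor\pmod{bq}$ for $j_*\in\Z_q$. At that level the horizontal shift equals (almost exactly) half the circumference: for $bq$ even, a point that rises to level $j_*$ lands in the right half and is then translated by exactly $bq/2$ into the left half, so it immediately descends; by the symmetry of Lemma~\ref{lm:symmetry} no crossing from above is possible either. Since an escaping orbit changes $j$ by $\pm1$ per step and is unbounded in the lift, it would have to cross level $j_*$, so none exists. For $bq$ odd the shift is $(bq-1)/2$ and exactly one point $r=(bq-1)/2$ can pass through, giving a single gate and hence a unique escaping class. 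If you want to salvage your approach, you should combine your symmetry with this bottleneck construction rather than trying to extract boundedness from the pairing alone.
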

			
				\begin{figure*}[hbt]
					\begin{subfigure}{.35\textwidth}
						\includegraphics[width=\textwidth]{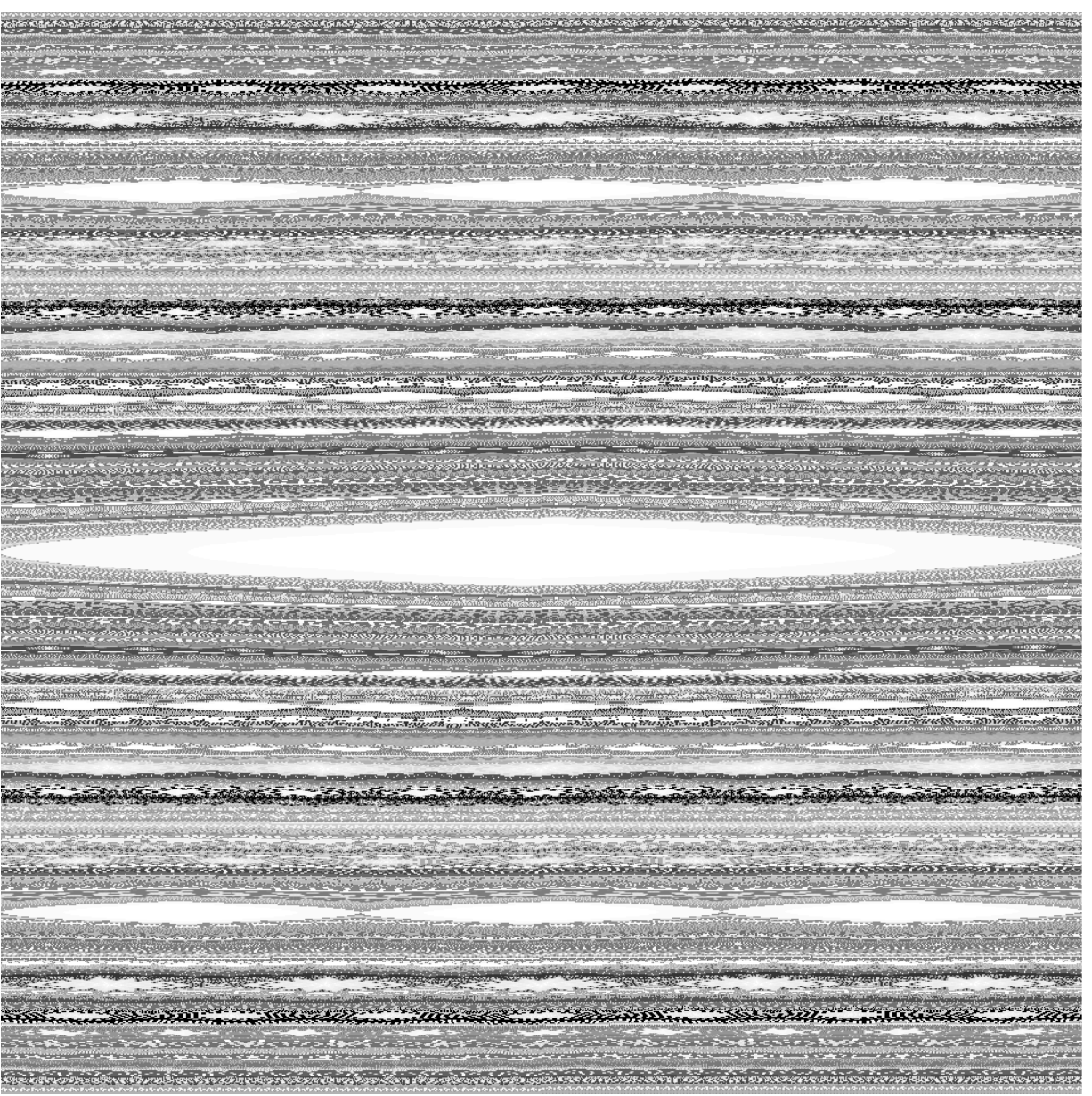}\caption{}
						\label{fig: islands}
					\end{subfigure}\qquad\qquad\qquad
					\centering \begin{subfigure}{.35\textwidth}
						\includegraphics[width=\textwidth]{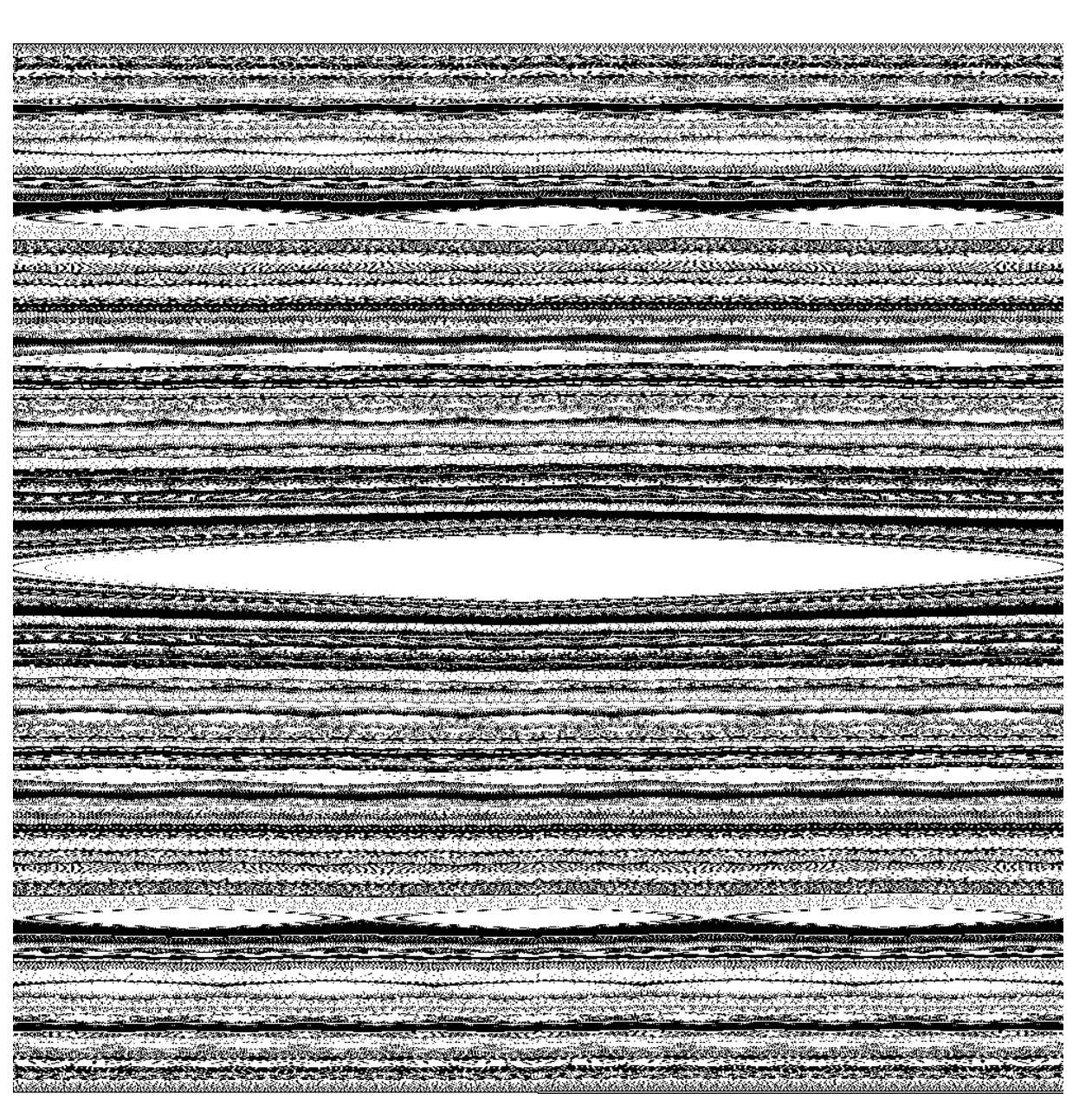}\caption{}
						\label{fig: wandering}
					\end{subfigure}
					
					\caption{(a): Phase space for 
						$q=992$ is filled 
						with the periodic orbits. Orbits are colored with the length 
						of the 
						period. Lighter points correspond to the shorter periods. 
						Largest 
						period equals $6168$. (b): Escaping orbit for $q=991$. 
						Length of the orbit 
						equals to $414639$.}
					\end{figure*}
\begin{proof} Thanks to the above discussion, every 
	unbounded trajectory of the system \eqref{eq:original map} corresponds 
	to the escaping trajectory of the system
	\eqref{eq:f with r}. Thus, one has to show that system \eqref{eq:f with r} 
	either does not have escaping trajectoies (for even $bq$) or 
	has exactly one escaping trajectory (for odd $bq$).
		Transformation \eqref{eq:f with r} can be considered as a continuous 
		transformation with respect to the second coordinate, since every 
		iteration gets an increment of $\pm 1$ in $j$. We will construct a 
		critical level $j=j_*$ such that no trajectory can cross it in the case of 
		even $bq$. As it will be clear from the construction, for the case of 
		odd $bq$ there is only one trajectory which can cross this level. 
		We will look for $j_*$ such that 
		\begin{equation}
		\label{eq:congruence}
		p(a+bj_*)=\lfloor bq/2 \rfloor\Mod{bq}.
		\end{equation} Note  that $p/q$ is irreducible and so 
		$\gcd(pb,bq)=b$. Since by the assumption of the theorem $\lfloor b 
		q/2\rfloor -pa$ is divisible by $b$ we conclude that the congruence 
		\eqref{eq:congruence} has exactly $b$ solutions in the form 
		$j_*+kq$, $k=0,1,\dots (b-1)$ (see \cite{number}). Thus all these 
		solutions correspond to the same equivalence class in $\Z_q$.   
		
		We will show that in the 
		case of even $bq$ 
		no trajectories may cross the level $j=j_*$ and for the odd $q$ there 
		is only 
		one such trajectory.
		Indeed,  assume for definiteness that for $(r, j_*-1)$ we have 
		$f(r,j_*-1)=(r',j_*)$. This means that $r'$ belongs to the right half of 
		the cylinder. But for the even $bq$ it follows that $bp j_*=bq/2$ and 
		so $r'$ 
		is shifted exactly by $bq/2$ thus the trajectory coming to the critical 
		level from below will go down at the next step. From lemma 
		\ref{lm:symmetry} it follows that neither trajectory can cross this level 
		from above. 
		
		For the case of odd $bq$ one gets $pj=(bq-1)/2$ and so only $r= 
		(bq-1)/2$ together with its image $r'= bq-1$ belong to the right half 
		of 
		the cylinder. Since there is a unique point at which trajectory may 
		pass 
		the level $j_*$ 
	such a trajectory necessarily has to be escaping (see Fig. 
	\ref{fig:bottle})	\end{proof}

	\begin{remark}
		In the case of integral initial condition $y_0=a$ one can set $b=1$ 
		and 
		so the congruence \eqref{eq:congruence} always has a solution. Thus 
		theorem \ref{thm: periodic for q even} states that for $\alpha=1/2k$ 
		all the 
		trajectories of the system \eqref{eq:original map} remain bounded 
		while for $\alpha=1/(2k+1)$ there is only one equivalence class of 
		unbounded trajectories.
		\end{remark}
		
		From numerical simulations the following statement is evident.
		
		\begin{conjecture}
			For every $\alpha=p/q$, there exists $y_0 = a/b$ such that there is 
			an escaping orbit of \eqref{eq:original map} starting at the level 
			$y=y_0$.
		\end{conjecture}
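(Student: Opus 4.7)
The plan is to split on the parity of $q$ and reduce the odd case directly to Theorem \ref{thm: periodic for q even}.

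For $q$ odd, take $y_0 = 0$, i.e., $a = 0$ and $b = 1$. Then $bq = q$ is odd, and the congruence $\lfloor bq/2 \rfloor \equiv pa \Mod{b}$ reduces to $\lfloor q/2 \rfloor \equiv 0 \Mod{1}$, which is automatic. Part 2 of Theorem \ref{thm: periodic for q even} then furnishes a unique class of escaping orbits at $y = 0$, settling the conjecture in this case.

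For $q$ even, $bq$ is forced to be even for every $b$, so Theorem \ref{thm: periodic for q even} can only supply bounded orbits. My strategy is to choose $(a,b)$ so that the congruence \eqref{eq:congruence} \emph{fails}, thereby deactivating the barrier of Theorem \ref{thm: periodic for q even}, and then argue that an escaping orbit must nevertheless exist. A natural candidate is $y_0 = 1/2$ ($a = 1$, $b = 2$): the condition reads $q \equiv p\Mod{2}$, which fails because $\gcd(p,q)=1$ and $q$ even force $p$ odd. To actually exhibit the escaping orbit I would work on the finite lattice in \eqref{eq:f with r}, where $f$ acts as a permutation of $\Z_{bq} \times \Z_q$. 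Each cycle $\mathcal{O}$ has a well-defined net $j$-displacement $d(\mathcal{O})$ upon lifting to $\Z_{bq} \times \Z$, and by Lemma \ref{lm:symmetry} these displacements pair up as $\pm d$, so they sum to zero; a single nonzero $d(\mathcal{O})$ then yields an escaping trajectory.

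The main obstacle is producing a cycle with $d(\mathcal{O}) \neq 0$ when $q$ is even. My plan is a proof by contradiction: assume $d(\mathcal{O}) = 0$ for every cycle, so every orbit is trapped between two consecutive levels of $j$, and attempt to extract from such a universal barrier a level $j_\ast$ satisfying \eqref{eq:congruence}, contradicting the choice of $(a,b)$. This is the reverse of the implication proved in Theorem \ref{thm: periodic for q even}: that proof shows the congruence \emph{implies} a barrier, whereas the conjecture demands that the \emph{absence} of the congruence preclude universal trapping. I expect this reversal to be the substantive difficulty---it probably requires a careful analysis of how the interval-exchange transformation mixes the strips $I_{j,\pm}$ across adjacent levels, and may well require a genuinely new idea rather than a direct adaptation of the argument of Theorem \ref{thm: periodic for q even}, which is presumably why the statement appears as a conjecture rather than a theorem.
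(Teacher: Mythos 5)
First, note that this statement is a \emph{conjecture} in the paper: the authors prove it only for odd $q$ (where $a=0$, $b=1$ satisfies the hypothesis of Theorem \ref{thm: periodic for q even}) and, after fixing $p=1$, for $q=4k+2$ with $y_0=1/2$ (Theorem \ref{thm:4k+2}); for $q=4k$ they offer only numerical evidence. Your odd-$q$ reduction is exactly the paper's observation and is fine. The even-$q$ part of your proposal, however, has concrete problems beyond being only a plan. Your ``natural candidate'' $y_0=1/2$ for all even $q$ is contradicted by the paper's own data: for $q=4k$ (with $p=1$) the table of minimal denominators starts $b=3,13,11,45,\dots$, so $b=2$ gives \emph{no} escaping orbit already at $q=4$, and the needed $b$ grows quickly with $k$. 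The case where $y_0=1/2$ does work, $q=4k+2$, $p=1$, is precisely Theorem \ref{thm:4k+2}, and its proof is not a soft counting argument but an explicit analysis of the orbit of $(r_0,j_0)=(4k-1,2k+1)$ using the mod-$4$ invariant of Lemma \ref{lemma: mod 4 points} to show two consecutive increases occur while two consecutive decreases are impossible at the relevant levels; nothing in your proposal supplies such an argument.

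Two further steps in your plan are unsound. The symmetry pairing from Lemma \ref{lm:symmetry} only shows the net displacements of cycles sum to zero, which is consistent with \emph{all} of them being zero, so it produces nothing. And in your contradiction scheme, ``$d(\mathcal{O})=0$ for every cycle, so every orbit is trapped between two consecutive levels of $j$'' is a non sequitur: a zero-displacement cycle is merely periodic in the lift and may traverse many levels before returning, so no barrier level $j_\ast$ satisfying \eqref{eq:congruence} can be extracted this way. Worse, the implication you aim to prove --- failure of the congruence at a \emph{fixed} $y_0$ forces an escaping orbit at that level --- is false: for $q=4k$, $y_0=1/2$ the congruence fails yet (per the paper's search) all orbits at that level are bounded. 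The conjecture is existential in $y_0$, so any proof must explain how to \emph{find} a suitable $a/b$ as $q$ varies, and the paper's data ($b$ growing with $q$, and $a=1$ insufficient) indicates no simple uniform choice exists; this is the genuinely open difficulty, and your proposal does not close it.
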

		
		For odd $q$ it follows from theorem \ref{thm: periodic for q even} 
		that $a=0$, $b= 1$ provides the desired result. When the  technical 
		conditions 
		of the Theorem \ref{thm: periodic for q even} is not satisfied, 
		congruence \eqref{eq:congruence} has 
		no solutions and we cannot construct the bottleneck level passing 
		which will assure that trajectory is escaping. However, one particular 
		case 
		seems to be tractable. From here on we fix $p$ to be equal to $1$.
		
\begin{thm}\label{thm:4k+2}
	For $q=4k+2$ there exists an escaping 
	trajectory of the transformation $f$, starting at the level $y_0=1/2$.
\end{thm}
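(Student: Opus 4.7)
The parameters of the theorem correspond to $a=1$, $b=2$ (hence $bq=8k+4$), and the congruence \eqref{eq:congruence} becomes $1+2j\equiv 4k+2\Mod{8k+4}$, which has no solution since the left side is odd and the right side even. The bottleneck argument of Theorem \ref{thm: periodic for q even} is therefore unavailable, and the plan is instead to exhibit an explicit periodic orbit of the lattice map \eqref{eq:f with r} whose lift to $\Z_{bq}\times\Z$ has vertical displacement exactly $+q$.

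With these parameters the map \eqref{eq:f with r} reads $r'=r+1+2j\Mod{8k+4}$, $j'=j+\sgn(2r'-8k-3)\Mod{4k+2}$. A natural candidate, suggested by direct computation in small cases, is the orbit through $(r_0,j_0)=(bq/2,\,q/2-1)=(4k+2,2k)$. The first three iterates are uniform in $k$:
\[
(4k+2,2k)\mapsto(8k+3,2k+1)\mapsto(4k+2,2k+2)\mapsto(3,2k+1),
\]
which is two up-steps followed by a down-step for every $k\ge1$. From here the orbit displays a staircase pattern---climbing the $j$-levels in batches punctuated by brief corrective descents---and numerically returns to $(4k+2,2k)$ after a period $P(k)=2,10,22,42,\dots$ for $k=0,1,2,3,\dots$, with total lifted $j$-displacement equal to $+q$.

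To formalize this I would partition the orbit into blocks lying between consecutive visits to the slab $j\in\{2k,2k+1\}$, compute the $r$-progression within each block using the closed form of the iterated shifts $r\mapsto r+(1+2j)\Mod{8k+4}$ (an arithmetic progression along a constant-$j$ stretch), and show by induction on the block index that the sequence of blocks is periodic with the required displacement. Lemma \ref{lm:symmetry} applied to the involution $\sigma(r,j)=(bq-1-r,q-1-j)$ should also help cut the work: the orbit through $(4k+2,2k)$ is not $\sigma$-invariant, so establishing the displacement up to sign already suffices.

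The principal difficulty is that the precise positions of the corrective descents depend on $k$ through arithmetic in $\Z_{8k+4}$, so the induction parameter must encode more than simply ``how far up have we gone.'' The cleanest route is likely to find a recursive relation between the orbits for $k$ and $k+1$, reducing the claim to a finite base case plus a uniform transfer step; identifying such a recursion, or otherwise finding an invariant that makes the staircase structure manifest, is where the main technical work lies.
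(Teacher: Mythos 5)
There is a genuine gap here: what you have written is a program for a proof, not a proof. You correctly observe that the congruence \eqref{eq:congruence} is unsolvable for $a=1$, $b=2$, you propose a concrete candidate orbit, and you verify its first three iterates; but everything after that --- the block decomposition of the orbit, the induction over blocks, the recursion linking the orbits for $k$ and $k+1$, and the invariant controlling where the ``corrective descents'' occur --- is left as a plan, and you yourself flag the last item as ``where the main technical work lies.'' Nothing in the proposal excludes the possibility that for some $k$ the staircase eventually closes up with zero net displacement; the period sequence $2,10,22,42,\dots$ and the displacement $+q$ are numerical observations, not consequences of anything proved. Since the entire content of the theorem is precisely that the climb cannot be undone, the statement is not established.

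The paper's proof shows how little of the orbit one actually needs to control. It also fixes an explicit starting point, $(r_0,j_0)=(4k-1,2k+1)$, but instead of following the orbit it proves a congruence invariant (lemma \ref{lemma: mod 4 points}): along this orbit $r_m\equiv 3\Mod{4}$ when $m$ is even and $r_m\equiv 2\Mod{4}$ when $m$ is odd. The orbit begins with two consecutive ascents $2k+1\to 2k+2\to 2k+3$, and to ever return to the lifted level $2k+1$ it would have to perform two consecutive descents starting from some orbit point at level $2k+3$; the mod-$4$ constraint forces every orbit point at level $2k+2$ capable of descending to $2k+1$ to have $r\ge 4k+2$, while every descent from level $2k+3$ lands at $r'\le 4k+1$, so no such double descent exists and the orbit escapes. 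Your candidate $(4k+2,2k)$ admits exactly the same kind of invariant (the increments $1+2j_m$ alternate between $1$ and $3$ modulo $4$, and $8k+4\equiv 0\Mod{4}$, so $r_m$ alternates between $2$ and $3$ modulo $4$), so the most economical repair of your argument is to abandon the attempt to compute the period and instead adapt this ``no double descent above the starting slab'' obstruction to your orbit --- or simply switch to the paper's starting point.
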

Letting $a=1$, $b=2$ we will show that the orbit of the 
point 
$(r_0, j_0) 
= (4k-1,2k+1)$ is unbounded.
 From \eqref{eq:f with r} we get the following system
	\begin{equation*}
				\begin{cases}
				r' = r + 1+2j \Mod{2q} \\
				j' = j + \textrm{sgn}(1+2r'-2q)\Mod{q} 
				\end{cases}
				\end{equation*}
Next lemma provides us some control on the sub-lattice for 
which 
the orbit of $(r_0,y_0)$ should belong to.
				
				\begin{lemma}
					\label{lemma: mod 4 points}
					Let $q=4k+2$.  Denote by $(r_m,j_m)$ the 
					$m$-th 
					iterate of the point $(r_0,j_0)=(4k~-~1, 
					2k~+~1)$. 
					Then  
                     $r_m+m\Mod{2} = 3 \Mod{4}$.
				\end{lemma}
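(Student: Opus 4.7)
The statement is a mod-$4$ congruence for $r_m$ whose correction term $(m \bmod 2)$ just encodes an alternation with the parity of $m$. I would prove it by induction on $m$. The base case is immediate: $r_0 = 4k-1 \equiv 3 \pmod{4}$ and $m=0$, so $r_0 + 0 \equiv 3 \pmod{4}$.

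For the inductive step, two elementary observations do all the work. First, $2q = 8k+4$ is divisible by $4$, so the $r$-update $r_{m+1} = r_m + 1 + 2j_m \pmod{2q}$ reduces cleanly to $r_{m+1} \equiv r_m + 1 + 2j_m \pmod{4}$; the wraparound modulo $2q$ cannot disturb the congruence modulo $4$. Second, the parity of $j_m$ strictly alternates: since $q = 4k+2$ is even, reducing modulo $q$ preserves parity, so $j_{m+1} = j_m \pm 1 \pmod q$ always flips the parity of $j$, and combined with $j_0 = 2k+1$ being odd this gives $j_m$ odd if and only if $m$ is even. In particular $2 j_m \equiv 2 \pmod 4$ for even $m$ and $2 j_m \equiv 0 \pmod 4$ for odd $m$.

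A short case split on the parity of $m$ now closes the induction. If $m$ is even, the inductive hypothesis gives $r_m \equiv 3 \pmod 4$ and the parity observation gives $2 j_m \equiv 2 \pmod 4$, so the recurrence yields $r_{m+1} \equiv 3 + 1 + 2 \equiv 2 \pmod 4$, which matches the desired value $3 - ((m+1) \bmod 2) = 2$. If $m$ is odd, then $r_m \equiv 2 \pmod 4$ and $2 j_m \equiv 0 \pmod 4$, giving $r_{m+1} \equiv 2 + 1 + 0 \equiv 3 \pmod 4$, again as required.

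There is no real obstacle; the only point worth flagging is that the evenness of $q$ is essential, and is used twice: once to ensure $4 \mid 2q$ (so mod-$4$ residues survive reduction of $r$ modulo $2q$), and once to ensure that the $\pm 1$ in the $j$-update is never absorbed by a mod-$q$ reduction, so that parity really does alternate. Everything else is routine bookkeeping modulo $4$.
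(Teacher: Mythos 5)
Your proof is correct and follows essentially the same route as the paper's: induction on $m$, using the alternating parity of $j_m$ (which relies on $q$ being even) together with the recurrence $r_{m+1}=r_m+2j_m+1$ reduced modulo $4$. Your explicit remark that $4\mid 2q$ so the mod-$2q$ wraparound cannot disturb the mod-$4$ residue is a small point the paper leaves implicit, but otherwise the arguments coincide.
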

				\begin{proof} 
					At first we observe that the parity of the second coordinate of 
					the point always differs from the parity of $m$. Indeed $j_0$ is 
					odd and at each step the trajectory gets or looses $1$. 
					
					We proceed by induction. 
					First let us consider $m~=~0$. We have $j_0 = 
					2k+1=1\Mod{2}$ and $r_0 =  4k-1 = 3 \Mod{4}$. Then for 
					$m=1$ we get $r_1 = r_0+2j_0+1 
					= 3+2+1 = 2 \Mod{4}$. 
					
					Now assume that for some even $m$ the assumption of the 
					lemma holds true. Then since $j_m=1\Mod{2}$ it follows 
					$r_{m+1}=r_m+2j_m+1=2\Mod{4}$. 
					Finally if the assumption holds for some odd $m$ we get 
					$j_m~=~0\Mod{2}$ and therefore 
					$r_{m+1}~=~2~+~1~+~4~\Mod{4}~=~3~\Mod{4}$. 
				\end{proof}

				\begin{proof}[Proof of theorem \ref{thm:4k+2}]
					Consider the orbit of the point 
					$(r_0,j_0) 
					= (4k-1,2k+1)$.
					One can easily calculate that $j_1 = j_0+1$ and $j_2 = 
					j_1+1$, 
					that is, there are immediately two consecutive increases. It 
					then 
					suffices to show that there is no point at level $j = 
					2k+3$ 
					from which there are two consecutive decreases.
					
					To have a decrease to the 
					$j=2k+1$ level from $(r,2k+2)$ we need to have 
					$1+2r'-2q < 0$, i.e.,
					\begin{equation}
					\label{eq:inequality}
					r+q+3 
					\Mod{2q} 
					< q-\frac{1}{2} 
					\end{equation}
				%
					For $r<q-3$ we have $r+q+3 \Mod{2q} = r+q+3$, so for the 
					inequality (\ref{eq:inequality}) to hold we need $r < 
					-\frac{1}{2}-3$, 
					which is impossible. For $r \geq q-3$ we have $r+q+3 
					\Mod{2q} = 
					r-q+3$, so for (\ref{eq:inequality}) to hold we need 
					$r<2q-\frac{7}{2}$. 
					By lemma \ref{lemma: mod 4 points}, $r = 2 \Mod{4}$ for 
					any 
					point in our desired orbit at level $j = 2k+2$, and so 
					the 
					only $r$'s that are possibly in the orbit and result in a decrease 
					from this level are $r = 4k+2, 4k+6, \ldots, 8k-2$.
					
					In the $j = 2k+3$ level, to have a decrease to the $j = 
					2k+2$ level we need to have $r' 
					< 
					4k+\frac{3}{2}$. To have two consecutive decreases, this $r'$ 
					must be 
					one of the $r$'s we found in the previous paragraph. But the 
					smallest such $r$ is $4k+2$, so this cannot happen.
				\end{proof}
		
		For $q={4k}$, we searched 
		for $y_0=a/b$ that give an escaping orbit for some $(x_0,y_0) \in 
		L_e$. 
		We present the table of $a/b$ depending on $k$ with the 
		smallest 
		$b$.
		
		\begin{flushright}
			\begin{tabular}{l|*{8}{|c}r}
				$k$ & 1 & 2 & 3 & 4 & 5 & 6 & 7 & 8 \\
				\hline \hline $a$ & 1 & 4 & 4 & 1 & 26& 36 & 67 & 63  \\
				\hline $b$ & 3 & 13 & 11 & 45 & 57 & 103 & 144 & 205 \\ 
				\hline
			\end{tabular}
			
			\begin{tabular}{l|*{8}{|c}|r|}
				$k$&$\cdots\;\;\,9$ & 10 & 11 & 12 & 13 & 14    \\
				\hline \hline  $a$&$\cdots\;\, 77$ & 19 & 23 
				& 360 & 243 & 23   \\
				\hline $b$&$\cdots\,227$ & 337 & 
				223 & 1043 & 1264 & 505 \\ 
				\hline
			\end{tabular}
		\end{flushright}	
		
		One can see that the $b$ required increases rather quickly with $k$. It 
		also appears that one cannot simply narrow the search by taking 
		$a=1$. For example, for $k=3$ we searched for escaping orbits with 
		$y_0 = 1/b$ and found none for $b \leq 5000$.
		
		\subsection{Length of the escaping orbit.}
		Now we will investigate the growth rate of the escaping orbit. The 
		fastest possible rate fo the transformation \eqref{eq:original map} is 
		linear, i.e. the trajectory may gain as much as $O(N)$ in the second 
		coordinate after $N$ iterations. In fact, escaping trajectories grow 
		much slower. Since the phase space of the transformation \eqref{eq:f 
		with r} is finite and thus so are all the trajectories we will consider the 
		lengths of the trajectories instead of their growth rates. Let 
		$\alpha=1/q$ and $y_0=0$. Theorem \ref{thm: periodic for q even} 
		provides unique 
		escaping trajectory for each odd $q$. 
			\begin{definition}
			Define $\ell(q)$ as the unique odd-length period under $f$ on 
			$L_o$. 
		\end{definition}
		Quantity $\ell(q)$ describes the portion of the phase space 
		$\Z_q\times \Z_q$ swiped by the escaping trajectory. Thus linearly 
		growing trajectory should have $\ell(q)=O(q)$, since such a trajectory 
		should visit every level of the lattice $O(1)$ amount of times.  In fact, 
		numerical experiments show that the escaping trajectory has the 
		slowest 
		possible growth rate $\ell(q)=O(q^2)$ (see Fig. \ref{fig: fraction 
		of q^2})
		\begin{figure}[h]
				\centering
				\includegraphics[width=0.4\textwidth]{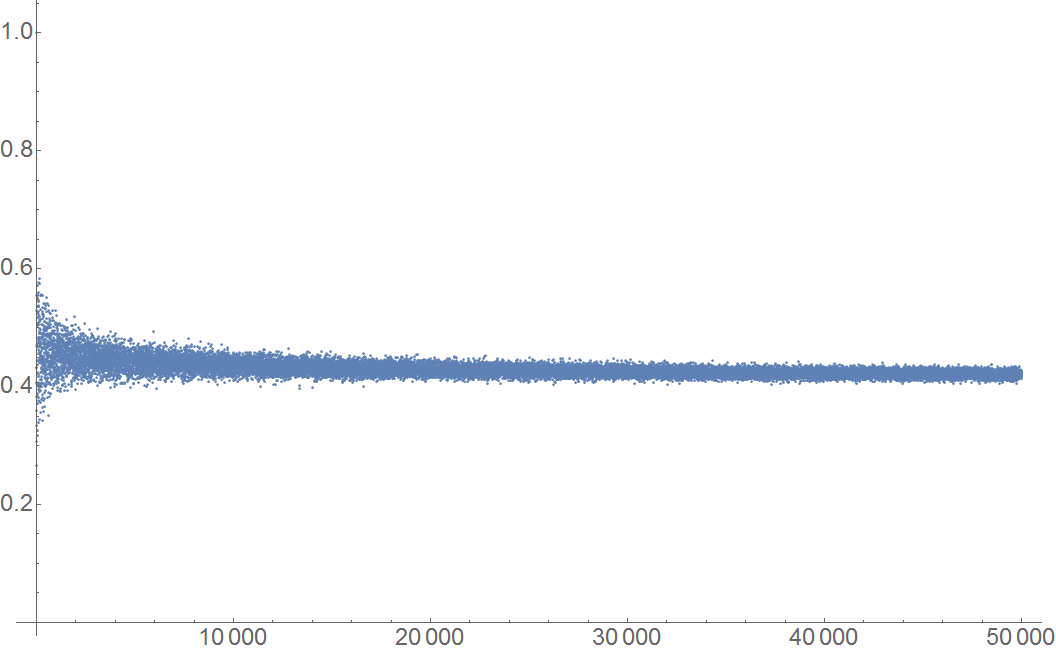}
				\caption{$\ell(q)/q^2$ for $q < 50~000$. The 
					average 
					value of $\ell(q)/q^2$ was found to be about 
					$0.43$.}
				\label{fig: fraction of q^2}
			\end{figure}
			\begin{conjecture}\label{con:asymptotic}
			Length of the escaping orbit $\ell(q)$ grows 
			as 
			$O(q^2)$.
		\end{conjecture}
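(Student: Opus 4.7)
The plan is to attack the stated upper bound $\ell(q) = O(q^2)$ via a pigeonhole argument on the finite invariant lattice, and then to discuss how to sharpen the constant in line with the numerical evidence. For the setting of the conjecture---$\alpha = 1/q$ and $y_0 = 0$, so $p=1$, $a=0$, $b=1$---the lattice $L_o$ has exactly $bq \cdot q = q^2$ points. Lemma \ref{lm:torus} shows $L_o$ is $f$-invariant, and the area-preservation of the cylinder map, restricted to $L_o$, forces $f|_{L_o}$ to be a bijection of a finite set. Every orbit is therefore a cycle of length at most $|L_o| = q^2$; in particular the unique escaping orbit satisfies $\ell(q) \le q^2$, which is literally the claim $\ell(q) = O(q^2)$.

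The numerical plot in Figure \ref{fig: fraction of q^2} suggests the sharper $\ell(q) \lesssim 0.5\, q^2$, so the substantive content of the conjecture is that the implicit constant stays bounded by some explicit $C < 1$. To push the trivial constant $1$ down to such a $C$, I would try to exhibit a family of bounded periodic orbits that must coexist with the escaping one and together occupy a positive fraction of $L_o$. The organizing tool is Lemma \ref{lm:symmetry}: orbits are either self-symmetric under $(x,y) \mapsto (1-x,-y)$ or occur in symmetric pairs of equal length, and the escaping orbit itself is self-symmetric, as in the proof of Theorem \ref{thm: periodic for q even}. Natural candidates for large bounded families are orbits passing near the symmetry center $(1/2,0)$ and orbits on near-resonant rows $j$ where $pj \Mod{bq}$ is small, whose iterates stay inside a thin vertical strip for many steps.

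The main obstacle is the complicated KAM-like island structure visible in Figure \ref{fig: islands}: bounded orbits arrange themselves into many nested families whose individual periods depend delicately on the arithmetic of $p/q$, and so their aggregate cardinality is not obviously bounded below by a positive fraction of $q^2$. A more robust route is to estimate the \emph{average} cycle length of $f|_{L_o}$ from above: if one can show that the mean period is $O(q)$, then only $O(q)$ cycles can approach the maximal length and peeling them off forces $\ell(q) \le (1-c)q^2$ for some absolute $c > 0$. I would first try this on $q$ prime, where the horizontal shift $r \mapsto r + pj$ acts transitively on each row with $j \ne 0$, giving the return map to $j=0$ a clean group-theoretic structure and making an orbit-counting estimate for the average cycle length tractable.
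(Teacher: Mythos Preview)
Your argument that $\ell(q)\le |L_o|=q^2$ is correct and is exactly the trivial pigeonhole bound. But you have read the conjecture in the wrong direction. In the paper ``grows as $O(q^2)$'' is being used loosely for ``is of order $q^2$''; the phrase just above the conjecture says that the escaping trajectory has the \emph{slowest possible growth rate}, i.e.\ $\ell(q)$ is as large as possible, and the sentence immediately after the conjecture says ``At this moment we can provide much milder estimate'' and then proves Theorem~\ref{Theorem 1}, which is a \emph{lower} bound $\ell(q)\gtrsim q\log q$. So the substantive (and open) content of the conjecture is the lower bound $\ell(q)\ge c\,q^2$ for some fixed $c>0$, not an upper bound with constant strictly below $1$.

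Consequently the second half of your proposal is aimed at the wrong target. Exhibiting a positive-density family of bounded orbits, or bounding the average cycle length from above, would only improve the upper constant from $1$ to some $C<1$; neither says anything about the conjectured lower bound. In fact the paper's partial result goes the opposite way: Lemmas~\ref{Lemma 1} and~\ref{Lemma 2} show that near the bottleneck level $j=(q+1)/2$ the escaping orbit is forced to oscillate many times between adjacent rows before it can advance, and summing these forced oscillations over $m=1,\dots,\lfloor q/9\rfloor$ yields $\ell(q)\ge\sum_m(\lfloor (q-1)/2m\rfloor-1)=\Theta(q\log q)$. To attack the actual conjecture you would need to push this mechanism (or something like it) from a logarithmic number of levels to a linear number, showing that on a positive fraction of rows the escaping orbit spends $\Theta(q)$ steps. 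The paper leaves this open.
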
	
		
		At this moment we can provide much milder estimate.
			\begin{thm}
				\label{Theorem 1} Consider transformation 
				\eqref{eq:original map} with $\alpha=\frac 1 q$ with odd 
				$q$ and $y_0=0$. Let $\{x_n,y_n\}$ denote the unbounded 
				trajectory provided 
				by theorem \ref{thm: periodic for q even}. Then for any $n$ and 
				$n'$ such that $|n'-n|<q\log q$ it follows that $|y_{n'}-y_n|<q$.  
			\end{thm}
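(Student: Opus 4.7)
My plan is to reduce the claim to a Birkhoff-type counting estimate and then to establish that estimate by a level-decomposition of the window. With $\alpha=1/q$ and $y_0=0$ (so $p=1$, $a=0$, $b=1$, $\delta=1$), the unfolded form of \eqref{eq:f with r} becomes
\[
r_{n+1} = r_n + j_n \pmod q, \qquad y_{n+1} - y_n = \sgn(2r_{n+1}-q+2)\in\{\pm 1\},
\]
with $j_n\equiv y_n\pmod q$. The increment is $+1$ precisely when $r_{n+1}\in\{(q-1)/2,\ldots,q-1\}$. Setting $N=n'-n$ and $M=\#\{k\in(n,n']: r_k\ge(q-1)/2\}$, the telescoping sum gives $y_{n'}-y_n = 2M - N$, so the desired $|y_{n'}-y_n|<q$ is equivalent to $|2M-N|<q$. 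By the symmetry of Lemma~\ref{lm:symmetry} it suffices to forbid $2M-N\ge q$.

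Assume for contradiction that some window of length $N<q\log q$ attains $y_{n'}-y_n\ge q$. Then the trajectory visits each of the $q$ consecutive levels $j_n, j_n+1,\ldots,j_n+q-1 \pmod q$. For each level $k$ let $V_k$ denote the number of window iterates at which $j=k$, and $U_k, D_k$ the number of subsequent up- and down-transitions out of level $k$. The net flux condition $U_k-D_k\ge 1$ is needed to achieve the ascent of $q$ levels. On the other hand, at level $k\ne 0$ the map $r\mapsto r+k\pmod q$ is a rotation of period $q/\gcd(k,q)$, and the upward transitions are those for which $r$ lies in a specific shifted "upper-half" subset. By analysing the $r$-values visited at level $k$, I would argue that realising a net $+1$ crossing at level $k$ forces $V_k\gtrsim q/k$. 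Summing,
\[
N \;=\; \sum_{k=0}^{q-1} V_k \;\gtrsim\; q\sum_{k=1}^{q-1}\frac{1}{k} \;\asymp\; q\log q,
\]
contradicting $N<q\log q$.

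The main obstacle is the non-monotone character of the ascent: the trajectory may descend from level $k$ to levels below and return many times before realising the net $+1$ crossing of level $k$, so the lower bound $V_k\gtrsim q/k$ must be made robust against arbitrary excursions. A promising workaround is to observe that every visit to level $k$, whether during the principal ascent or during a sub-excursion, contributes an $r$-value to the rotation orbit by $k$ on $\Z_q$; since upward transitions out of level $k$ require $r\in\{(q-1)/2-k,\ldots,q-1-k\}\pmod q$, the total $V_k$ cannot be much smaller than the rotation cycle length $q/\gcd(k,q)$ without violating $U_k>D_k$. The harmonic sum $\sum 1/k$ then captures the cumulative cost of cycling $r$ through $\Z_q$ at each of the $q-1$ rotation rates in turn, producing the $q\log q$ lower bound rather than the conjectured $q^2$.
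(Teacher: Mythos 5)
Your reduction to the counting identity $y_{n'}-y_n=2M-N$ and to forbidding a net ascent of $q$ levels in a short window is fine, but the entire content of the theorem is then concentrated in the per-level claim $V_k\gtrsim q/k$ (or $q/\gcd(k,q)$), and this claim is neither proved nor correct as you argue it. The flaw is in the picture of "rotation dynamics at level $k$": since every iterate changes $j$ by exactly $\pm1$, the orbit leaves level $k$ at every step, so consecutive visits to level $k$ are \emph{not} consecutive points of the rotation $r\mapsto r+k$; the $r$-increment between two visits depends on the whole excursion in between, and the cycle length $q/\gcd(k,q)$ does not control $V_k$. Moreover, net flux $U_k-D_k\ge 1$ by itself forces no lower bound at all: a single visit can realize the crossing. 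For instance, arriving at level $k$ from below means $r$ already lies in the right half, and if the shift $k$ is small the next image typically stays in the right half, so the orbit crosses level $k$ upward with $V_k=1$; this directly contradicts your claimed bound $V_1\gtrsim q$ at level $1$. So the harmonic sum you write down, although it numerically resembles the correct answer, attaches the cost $q/k$ to the wrong levels and for a reason the dynamics does not support.

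The mechanism that actually produces the $q\log q$ bound (and the one the paper uses) lives only near the bottleneck level $j_*=(q+1)/2$, where the relevant quantity is the \emph{two-step} shift at an adjacent pair of levels: at the pair $j=\frac{q+1}{2}+m$, $\frac{q+1}{2}+m-1$ the combined shift is $2m \pmod q$, which is small when $m$ (the distance from the bottleneck, not the level's own rotation number) is small. One first shows (Lemma~\ref{Lemma 1}) that two consecutive rises into level $\frac{q+1}{2}+m$ pin $r$ into a window of width $m$ just to the right of $q/2$; then (Lemma~\ref{Lemma 2}) the orbit must bounce between the two adjacent levels for at least $\lfloor\frac{q-1}{2m}\rfloor-1$ steps before it can leave, because the two-step rotation by $2m$ needs that long to carry $r$ across the half-cylinder. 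Since any ascent by $q$ must pass upward through every level $\frac{q+1}{2}+m$, $1\le m\le\lfloor q/9\rfloor$, and the corresponding blocks of iterates are disjoint (they are distinguished by their heights and half-cylinders), summing $\sum_{m\le q/9}\bigl(\lfloor\frac{q-1}{2m}\rfloor-1\bigr)$ gives the desired lower bound of order $q\log q$ on the time needed to gain height $q$. To repair your argument you would need to replace the unsupported per-level estimate by some version of this two-level, narrow-window analysis; as written, the proposal has a genuine gap at its central step.
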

		
		We will prove this theorem providing an aprioiry bound on 
		the length 
		$\ell(q)$.
		The idea of the proof consists in the estimate of the time it takes from 
		the escaping trajectory to pass the levels near the bottleneck level 
		$j=(q+1)/2$. We will use two lemmas. First lemma states that 
		two consecutive vertical 
		increases near $j=(q+1)/2+m$ (for $m \geq 0$ reasonably small) 
		cause the resulting iterate to be less than about $k$ units to the right 
		of $x=1/2$ (a line of discontinuity for $f$). Second lemma uses the 
		information about the first coordinate of the trajectory to estimate the 
		time trajectory will spent on the prescribed level (see 
		Fig \ref{fig:bottle}).
		
		\begin{lemma}
			\label{Lemma 1}
			Let $q \geq 1$ be odd. Suppose $(r_0,j_0) \in L_o$, 
			$(r_1,j_1)=f(r_0,j_0)$, and $(r_2,j_2)=f(r_1,j_1)$ are such that 
			$j_0 = \frac{q+1}{2}+m-2$, $j_1 = j_0+1$, and $j_2 
			= 
			j_0+2$ for some $m \in \{1,2,\ldots,\frac{q-1}{2}\}$. 
			Then 
			$r_2\in[ \frac{q-1}{2} , \frac{q-1}{2}+~m~-~1]$.
		\end{lemma}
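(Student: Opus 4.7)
The plan is to specialize the dynamics \eqref{eq:f with r} to the present setting ($p=1$, $a=0$, $b=1$, $\delta=1$) and then express the second iterate $r_2$ in terms of a single scalar parameter capturing how far $r_1$ sits into the right half of the cylinder.

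First I would write the map explicitly on $L_o$ as
\begin{equation*}
r' = r + j \Mod{q}, \qquad j' = j + \sgn(2r'-q+2) \Mod{q}.
\end{equation*}
An increase $j'=j+1$ happens precisely when $r' \ge (q-1)/2$, so the two prescribed increases translate directly into the two inequalities $r_1 \ge (q-1)/2$ and $r_2 \ge (q-1)/2$; the hypothesis on $j_0$ also gives $j_1 = (q-1)/2 + m$.

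Next I would set $r_1 = (q-1)/2 + s$ with $0 \le s \le (q-1)/2$ and compute $r_2 = r_1 + j_1 \Mod{q}$. Substituting produces the unreduced quantity $(q-1) + s + m$, and a short check with the bounds $1 \le m \le (q-1)/2$ and $0 \le s \le (q-1)/2$ shows that this sum lies in $[q,\,2q-2]$, so exactly one reduction by $q$ is needed and $r_2$ collapses to $s + m - 1$.

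The final step is to impose the second increase condition $r_2 \ge (q-1)/2$, which rearranges to $s \ge (q+1)/2 - m$; together with the trivial upper bound $s \le (q-1)/2$, this pins $r_2 = s + m - 1$ into the interval $[(q-1)/2,\, (q-1)/2 + m - 1]$ asserted by the lemma. I do not anticipate any real obstacle: the whole argument is elementary bookkeeping of residues modulo $q$, and the hypothesis $m \le (q-1)/2$ is precisely what prevents a second wrap in the computation of $r_2$, so it is in fact a necessary ingredient rather than an artifact.
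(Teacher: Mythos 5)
Your proof is correct and takes essentially the same route as the paper: both parametrize $r_1=\frac{q-1}{2}+s$, reduce $r_2=r_1+j_1$ modulo $q$ to get $s+m-1$, and combine the second increase condition $r_2\geq\frac{q-1}{2}$ with $s\leq\frac{q-1}{2}$ to trap $r_2$ in the stated interval. Your explicit verification that the unreduced sum lies in $[q,\,2q-2]$, so exactly one wrap occurs, is a small added precision over the paper's computation, but the argument is the same.
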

		\begin{proof}
			Let $j_0$, $j_1$, and $j_2$ be as in the statement of the lemma. 
			That $j_1>j_0$ means $r_1 > q/2$ (since we need 
			$\sgn(r_1-\frac{q}{2}) = 1$ in order for this to happen), and 
			in the same way $j_2>j_1 \implies r_2 > q/2$. Therefore 
			$\frac{q-1}{2} \leq r_i \leq q-1$ for $i \in \{1,2\}$. These 
			inequalities make sense, despite $\Z_q$ not being ordered, 
			because everything is in the interval $[0,q)$.
			
			Each value of $r_1$ satisfying these inequalities can be written as 
			$r_1 = \frac{q-1}{2}+n$ for some $n \in 
			\{0,1,\ldots,\frac{q-1}{2}\}$. We have 
			\[\begin{split} r_2 &= r_1 + j_1 \Mod{q} = 
			\left(\frac{q-1}{ 
			2}+n\right)+\\&+\left(\frac{q+1}{2}+m-1\right) 
			\Mod{q} 
			= n+m-1 \end{split}\]
			Using our definitions of $n$ and $m$, we obtain
			\[ \frac{q-1}{2} \leq r_2 = n+m-1 \leq \frac{q-1}{2}+m-1 \]
			as desired.
		\end{proof}
		
		\begin{figure}[hbt]
			\centering
			\includegraphics[width=0.4\textwidth]{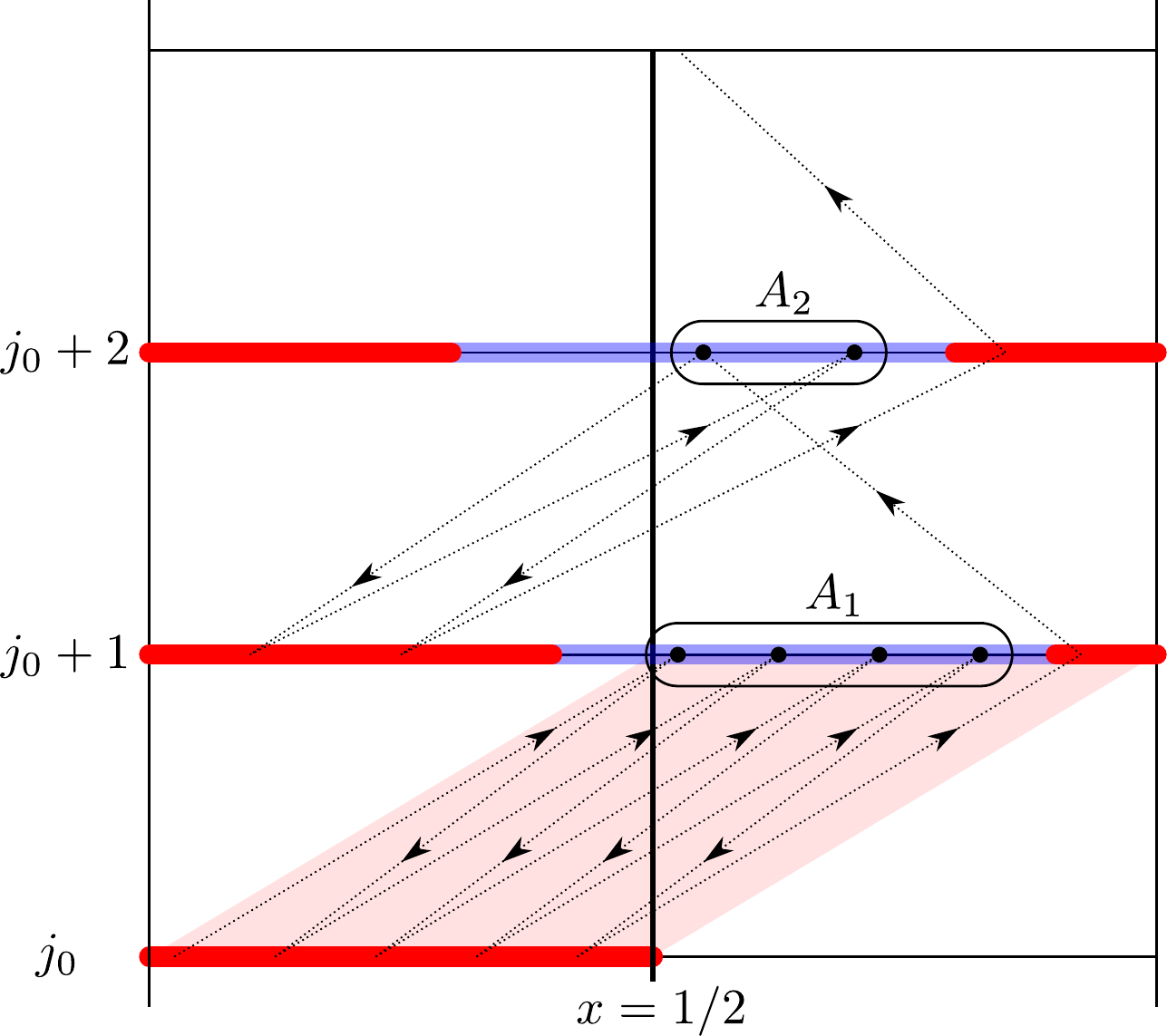}
			\caption{Escaping trajectory near the bottleneck 
			level.}\label{fig:bottle}
		\end{figure}
		
		The next lemma roughly states that if we start at a point within $k$ 
		units horizontally of $r=\frac{q+1}{2}$ (for $m>0$ reasonably small) 
		and at vertical level $j=\frac{q+1}{2}+m$, then the iterates of the 
		point 
		bounce at least about $q/2m$ times between $j=\frac{q+1}{2}+m$ 
		and 
		$j=\frac{q+1}{2}+m-1$.
		
		\begin{lemma}
			\label{Lemma 2}
			Let $q \geq 9$ be an odd integer. Let 
			$j_0=\frac{q+1}{2}+m$ 
			and 
			 $\frac{q+1}{2} \leq r_0 \leq \frac{q+1}{2}+m-1$ for $m
			\in 
			\{ 1,2, \ldots, \lfloor q/9 \rfloor \}$. Let $(r_0,j_0) \in L_o$ and 
			take $N_m$ to be the greatest integer such that $j_n \in \{ 
			\frac{q+1}{2}+m,~\frac{q+1}{2}+m-1 \}$ for all $n \leq N_m$. Then 
			$N_m \geq \left \lfloor \frac{q-1}{2m} \right \rfloor - 1$.
		\end{lemma}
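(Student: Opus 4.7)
My plan is to exploit the fact that while the trajectory of \eqref{eq:f with r} stays on the two adjacent levels $L = \frac{q+1}{2} + m - 1$ and $L+1 = \frac{q+1}{2} + m$, the $r$-dynamics reduces to a rigid translation by $2m$ modulo $q$ per complete down-up cycle. I would then count cycles until the translated $r$-value first escapes the safe region.

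\textbf{Step 1 (one cycle is a rotation by $2m$).} Starting from $(r, L+1)$ with $r$ in the given range, applying the map gives $r_1 = r + (L+1) \pmod{q}$; a direct check using the hypothesis $m \leq \lfloor q/9 \rfloor$ shows that $r_1$ falls into an explicit subinterval of $[0, \frac{q-3}{2}]$, so $j$ descends to $L$. Applying the map a second time, $r_2 = r_1 + L \pmod{q}$, and the same bookkeeping places $r_2$ in $[\frac{q-1}{2}, q-1]$, so $j$ climbs back to $L+1$. The total increment is $r_2 - r \equiv 2L + 1 \equiv 2m \pmod{q}$, exactly the shift claimed.

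\textbf{Step 2 (induction on the cycle count).} By induction on $k$, as long as cycles $0, 1, \ldots, k-1$ have all completed inside $\{L, L+1\}$, the iterate $r_{2k}$ lies in a length-$m$ interval obtained from the initial range by a shift of $2km \pmod{q}$, and the intermediate $r_{2k+1}$ lies in a length-$m$ interval of the form $[(2k+1)m + c, (2k+2)m + c]$ for an explicit constant $c$. The hypothesis $m \leq \lfloor q/9 \rfloor$ is used precisely here: it keeps both shifted intervals strictly inside $\Z_q$ for $k$ up to the critical cycle index, so no wrap-around muddles the left-versus-right classification.

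\textbf{Step 3 (counting).} A cycle first fails when either $r_{2k+1}$ exits $[0, \frac{q-3}{2}]$ or $r_{2k+2}$ exits $[\frac{q-1}{2}, q-1]$; both conditions reduce to inequalities of the form $(2k + \text{const})\, m \leq \frac{q \pm 1}{2}$, giving a maximal $k_{\max}$ of order $\frac{q-1}{4m}$. Hence at least $2 k_{\max}$ consecutive iterates remain on $\{L, L+1\}$, and the elementary bound $2\lfloor x/2 \rfloor \geq \lfloor x \rfloor - 1$ applied to $x = \frac{q-1}{2m}$ yields $N_m \geq \lfloor \frac{q-1}{2m} \rfloor - 1$.

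\textbf{Main obstacle.} The essential technical difficulty lies in the non-wrap-around bookkeeping of Step 2: one must verify that, across roughly $\frac{q}{4m}$ iterations, the shifted intervals for $r_{2k}$ and $r_{2k+1}$ never straddle the boundary of $\Z_q$ in a way that would reclassify a "left" point as "right" (or vice versa). This is exactly what $m \leq \lfloor q/9 \rfloor$ is engineered to guarantee, and squeezing the final floor-function inequality out of the induction requires a small case split on the parity of $\lfloor(q-1)/(2m)\rfloor$.
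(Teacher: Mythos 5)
Your proposal follows essentially the same route as the paper's proof: there the authors likewise write $r_0=\frac{q+1}{2}+s$ with $s\in\{0,\dots,m-1\}$, derive the explicit two-step formulas $(r_{2n-1},r_{2n})$ showing that each down--up cycle shifts $r$ rigidly by $2m\pmod q$, and count the number $n^*$ (of order $\frac{q-1}{4m}$, with $m\le\lfloor q/9\rfloor$ ensuring $n^*\ge 0$) of cycles that fit before the orbit leaves the two-level strip. The only substantive difference is in the final count: where you keep only the $2k_{\max}$ iterates coming from complete cycles and then invoke $2\lfloor x/2\rfloor\ge\lfloor x\rfloor-1$, the paper counts $2n^*+1$ points, i.e.\ it also credits the last half-cycle (the final descent before the wrap-around failure), and in the worst case $s=m-1$ that extra iterate is exactly what is needed to reach the stated constant $\left\lfloor\frac{q-1}{2m}\right\rfloor-1$; with only complete cycles your bound can fall one short, so the bookkeeping you defer to Step 3 should be done per iterate (or include the half-cycle) rather than per full cycle.
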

			
		\begin{proof}
			Let $r_0 = \frac{q+1}{2}+s$, where $s$ is in $\{0,1,\ldots,m-1\}$. 
			We 
			have
			\[
			\begin{split}
			r_1 &= r_0+j_0 \Mod{q} = s+m+1<\frac{q}{2}, \\ j_1& = 
			j_0-1 
			= 
			\frac{q+1}{2}+m-1 
			\end{split}\] and
			\[\begin{split}
			r_2 &= r_1+j_1 \Mod{q} = \frac{q+1}{2}+s+2m > \frac{q}{2},\\ 
			j_2 
			&= j_1 + 1 = \frac{q+1}{2}+m.
			\end{split}\]
			Every two iterations of $f$, the value of $r$ increases by the amount
			$ \left(\frac{q+1}{2}+m\right) + \left(\frac{q+1}{2}+m-1\right) 
			\Mod{q} = 2m $
			until $r$ increases past $q$. Thus,
			\[
			(r_{2n-1},\,j_{2n-1}) = \left(s+(2n-1)m+1, 
			\,\frac{q+1}{2}+m-1\right) 
			\]
			and
			\[
			(r_{2n},\,j_{2n})= \left(\frac{q+1}{2}+s+2n m, \,\frac{q+1}{2}+m 
			\right)\]
			for all integers $n$ with $0\leq n \leq n^*$, where 
			$n^*$ is such that 
			$
			s+(2n^*-1)m+1 < \frac{q}{2}$ and 
			$\frac{q+1}{2}+s+2mn^* < q$.
		
			We claim that $n^* =\left \lfloor \dfrac{q-1}{4m} \right\rfloor -1$ 
			satisfies these inequalities. We have
			\[ s+(2n^*-1)m+1 < 
			m-1+\left(\frac{q-1}{2m}-1\right)m+1 
			=
			\frac{q-1}{2} \]
			And on the other hand
			\[ \frac{q+1}{2}+s+2mn^* \leq 
			\frac{q+1}{2}+m-1+\]\[+2\left(\frac{q-1}{4m}-1\right)m = q-m-1 
			< 
			q, \]
			as claimed. Note also that $n^* \geq 0$, since
			\[ \left\lfloor \frac{q-1}{4m} \right\rfloor-1 \geq \frac{q-1}{4m}-2 
			\geq \frac{9(q-1)}{4q}-2 = \]\[=\frac{1}{4}-\frac{9}{4q} \geq 
			\frac{1}{4}-\frac{1}{4} = 0 \]
			Therefore the total number of points $N_m$ with $j$ in 
			$\{\frac{q+1}{2}+m,\frac{q+1}{2}+m-1\}$ satisfies
			\[ N_m \geq 2n^*+1 = \left \lfloor \frac{q-1}{2m} \right 
			\rfloor-1 \]
		\end{proof}
		
		\begin{proof}[Proof of theorem \ref{Theorem 1}]
            From the proof of theorem  \ref{thm: periodic for q even} it follows 
            that the escaping orbit pass through the point $(x_0,(q-1)/2)$, 
            where  $x_0 = \frac{1}{4}+\frac{q-1}{2}$). There exist 
			positive integers $n_k$ for $k \in \{1,2,\ldots,\lfloor q/9 \rfloor\}$ 
			such that
		\[j_{n_k-2} = \frac{q+1}{2} + k - 2,\;
			j_{n_k-1} = \frac{q+1}{2} + k - 1 \]
			\[j_{n_k} = \frac{q+1}{2} + k 
			\]
			since the orbit must pass through at least one point at each height.
			
			By lemma \ref{Lemma 1}, $\frac{q+1}{2} \leq r_{n_k} \leq  
			\frac{q+1}{2}+k-1$. Define
			\[ A_k = \left\{ (r_{n_k+m},j_{n_k+m}): m=0,1,...,\left\lfloor 
			\frac{q-1}{2k} \right \rfloor -2  \right\} \]
			By lemma \ref{Lemma 2}, $|A_k| = \left \lfloor \frac{q-1}{2k} \right 
			\rfloor-1$. Since
			\begin{equation*}
			\frac{q}{2}<x_{n_k+m}<q, ~ y_{n_k+m}=\frac{q+1}{2}+k 
			~\textrm{for~} m \textrm{ even and}
			\end{equation*}
			\begin{equation*}
			0<x_{n_k+m}<\frac{q}{2}, ~ y_{n_k+m}=\frac{q+1}{2}+k-1 
			~\textrm{for~} m \textrm{ odd,}
			\end{equation*}
			the $A_k$ are disjoint. Therefore we have
			\[ \ell(q) \geq  
			\sum_{k=1}^{\lfloor 
			q/9 
			\rfloor} \left(\left \lfloor \frac{q-1}{2k} \right \rfloor -1 \right) = O 
			(q\log q) \]
			as desired.
		\end{proof}

	\section{Periodic orbits.}
	\label{sec:periodic}
	We conclude our discussion with the numerical investigation of the 
	distribution of 
	the periodic orbits.
	
		\begin{figure}[hbt]\centering
			\begin{subfigure}{.48\textwidth}
				\includegraphics[width=\textwidth]{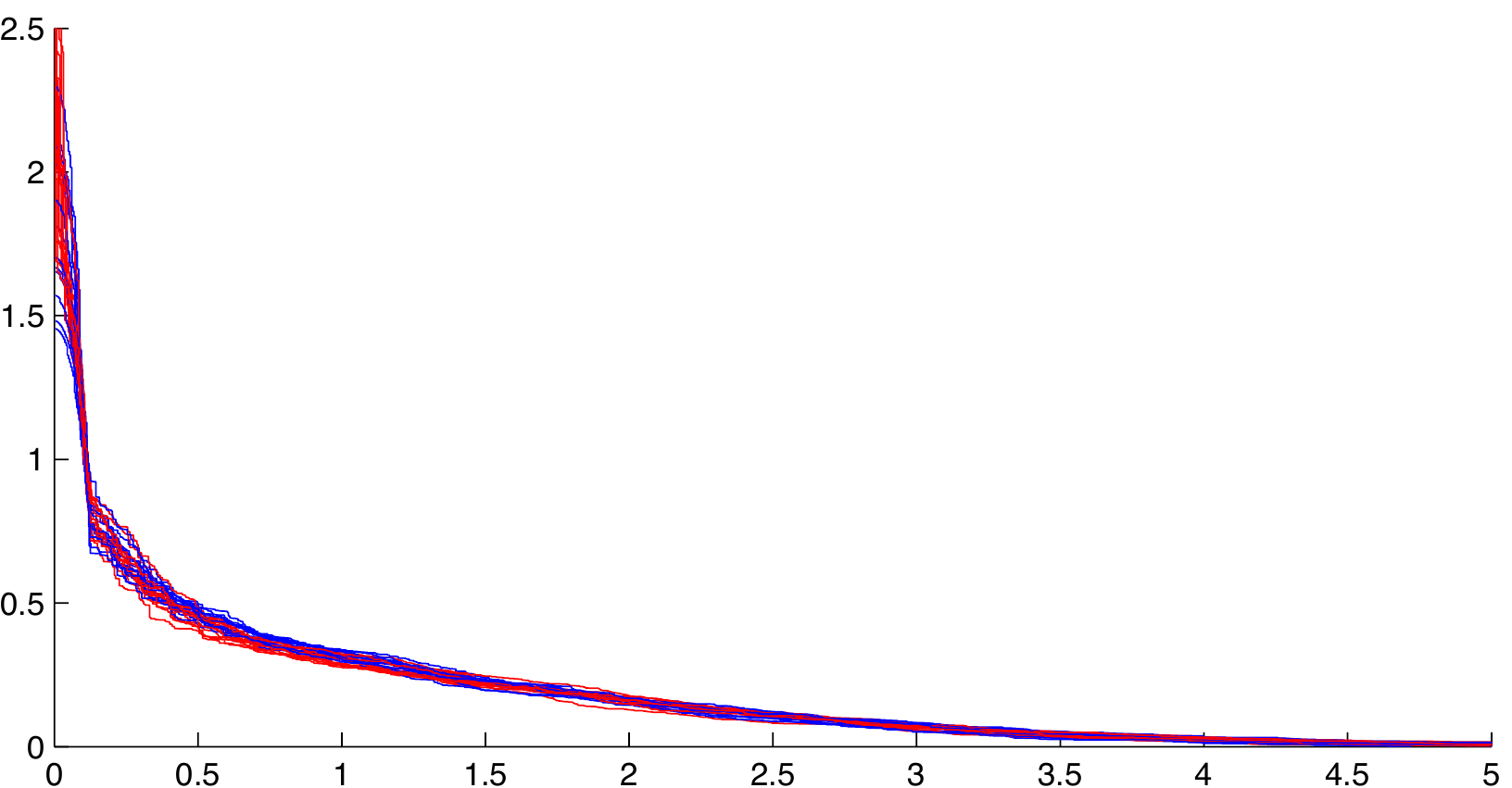}
				\caption{}\label{fig:young_4k+2}
			\end{subfigure}\par
			\centering \begin{subfigure}{.48\textwidth}
				\includegraphics[width=\textwidth]{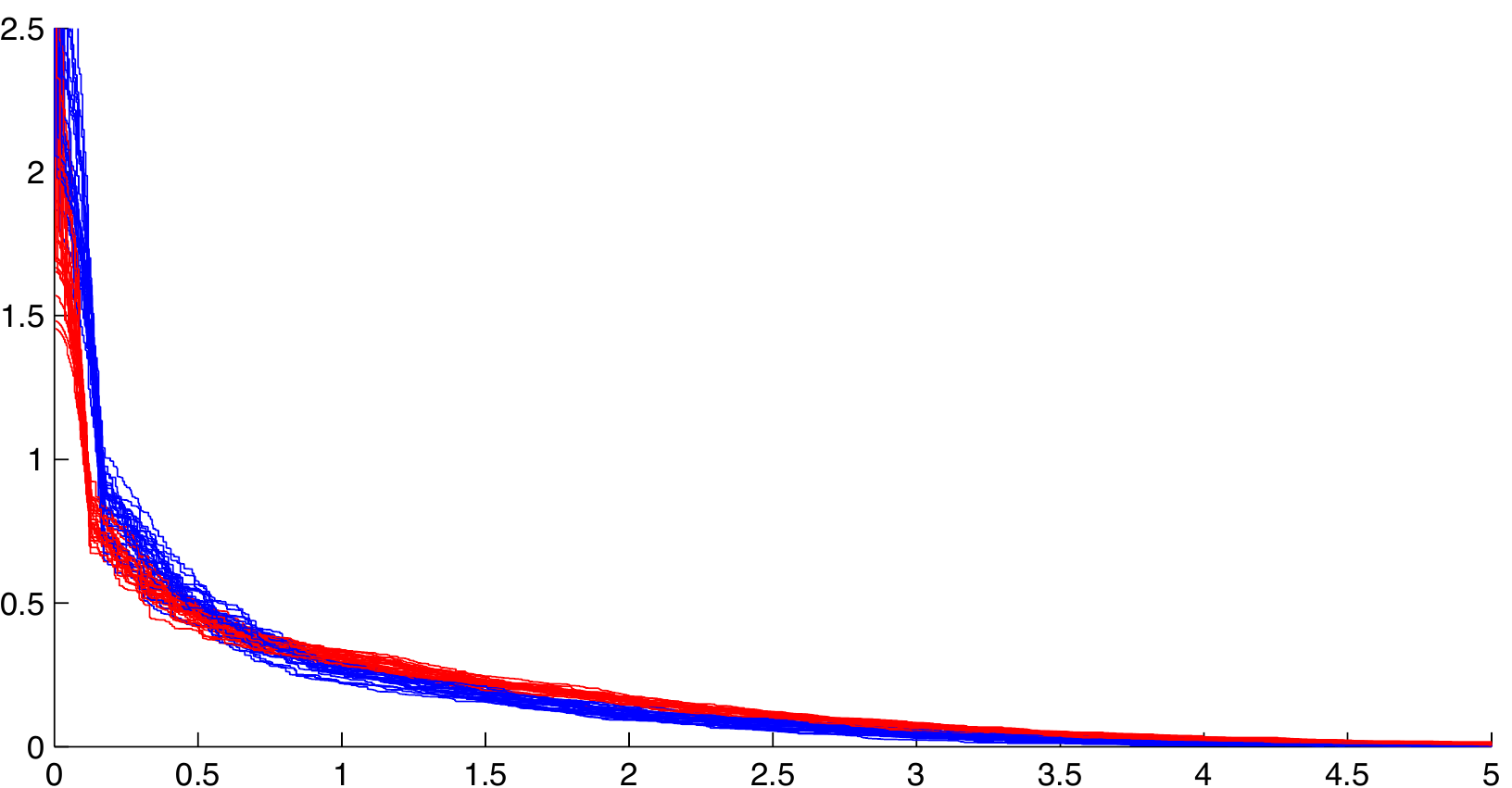}
				\caption{}\label{fig:young_odd}
			\end{subfigure}
			
			\caption{(a): Distribution
				of the preiodic orbits for even $q=950,\dots, 
				1000$. Case $q=4k$ is drawn in red. Blue color 
				corresponds to the case $q=4k+2$. (b): Distribution
				of the preiodic orbits  for $q=950,\dots, 
				1000$. Blue color corresponds to the case of odd $q$. Even $q$ 
				is 
				drawn in red.}
		\end{figure}
		 From theorem \ref{thm: periodic for q even} it follows 
	that the whole phase space of the system \eqref{eq:f with r} is divided 
	into 
	the set of periodic orbits of various periods. If $q$ is odd then there 
	exists a unique orbit of enormously large period which swipe almost a 
	half of the phase space. It turns out that all the other periods are 
	distributed in the range of $O(q)$. For even $q$ all the periods belong to 
	this range. What is spectacular that we observe some similarity in the 
	distribution of these periods for even and odd values of $q$. Collection 
	of the periodic orbits represents a partition of the number 
	$q^2$ 
	into the sum of the periods of the trajectories. We present here the 
	Young 
	diagrams for these partitions scaled by the factor of $q$ in both 
	directions. For the case of odd $q$ we present the diagram 
	corresponding to the partition of the set of bounded trajectories. It turns 
	out that the Young diagrams constructed for the cases of 
	even $q$ and for the bounded part of the phase space for the odd $q$ 
	are similar (see Fig. \ref{fig:young_odd}).

	\begin{conjecture}
		Maximum length of the bounded trajectories for the transformation 
		\eqref{eq:f 
			with 
			r} has the magnitude $O(q)$.
	\end{conjecture}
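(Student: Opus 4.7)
The plan is to establish that every bounded orbit is confined to a level set of a nearly conserved quantity whose level sets in the lattice contain only $O(q)$ points. Restricting for concreteness to the case $p=1$, $y_0=0$, the twist $r'=r+j\Mod{q}$ motivates the candidate
\begin{equation*}
H(r,j):=r-\frac{j(j-1)}{2}\Mod{q},
\end{equation*}
which a direct computation shows is \emph{exactly} preserved along the upward branch $j\mapsto j+1$ of \eqref{eq:f with r} and shifts by $2j-1\Mod{q}$ along the downward branch $j\mapsto j-1$. Since every bounded orbit has an equal number of up- and down-steps, the accumulated $H$-jumps around a period must vanish modulo $q$, giving the identity $\sum_{\text{down steps}}(2j-1)\equiv 0\Mod{q}$.

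The first step would be to use Lemma \ref{lm:symmetry} to pair every bounded orbit with its image under the involution induced by $(x,y)\mapsto(1-x,-y)$, which interchanges up- and down-steps between levels $j$ and $-j$. This symmetry should refine the global modular identity into a local cancellation of $H$-jumps at symmetric levels, confining $H$ to an interval of length $O(1)$ along the orbit. The second step would combine this confinement with the fact that the horizontal dynamics at a fixed level is the rotation $r\mapsto r+j\Mod{q}$ of order $q/\gcd(q,j)$: an equidistribution argument should show that the orbit cannot revisit a single level more than a bounded number of times before the accumulated $H$-jumps drift outside its allowed band. Together with the fact that the $j$-coordinate ranges over at most $q$ distinct residues, this would yield a total period of $O(q)$.

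The principal obstacle is the second step: converting the modular conservation law for $H$ into a genuine boundedness statement for the per-level visit count. In the smooth standard map, KAM theory circumvents this obstruction by quantitative averaging; in our piecewise-linear setting the sign discontinuity destroys any exact invariant and one must track the delicate interaction between the linear twist and the resulting jumps. A complementary route, suggested by the Young diagrams displayed above, is to analyse directly the first-return map of $f$ to the bottleneck section $\{j=\lfloor q/2\rfloor\}$ from the proof of Theorem \ref{thm: periodic for q even}: if this induced map can be shown to have combinatorial complexity uniformly bounded in $q$, the conjecture would reduce to a finite case analysis within each residue class of $q$, and the vertical excursion of any bounded orbit would automatically be $O(q)$ since no trajectory may cross the bottleneck.
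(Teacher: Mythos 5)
The statement you are addressing is not proved in the paper at all: it is stated as a conjecture supported only by the numerics of Section \ref{sec:periodic}, so there is no argument of the authors to compare yours with, and your text, by its own admission (``the principal obstacle is the second step''), is a research plan rather than a proof. Within that plan, the one fully verifiable ingredient is correct: for $p=1$, $a=0$, $b=1$ the quantity $H(r,j)=r-j(j-1)/2 \Mod q$ is exactly preserved by an up-step of \eqref{eq:f with r} and gains $2j-1$ on a down-step, since the $r$-update uses the pre-step value of $j$. Everything after that, however, has genuine gaps. First, Lemma \ref{lm:symmetry} is an involution of the phase space that sends an orbit to the orbit of its mirror image; a given bounded orbit need not be invariant under it (most are not, e.g.\ orbits trapped strictly above the bottleneck level of Theorem \ref{thm: periodic for q even}), so the symmetry cannot produce a ``local cancellation of $H$-jumps at symmetric levels'' along a single orbit. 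Second, the claimed confinement of $H$ to an $O(1)$ window is not merely unproved but appears false: an orbit that bounces repeatedly between two adjacent levels $j$ and $j-1$ gains $2j-1$ in $H$ at every bounce, so after $k$ bounces $H$ has drifted through $k$ distinct values of the coset generated by $2j-1$ in $\Z_q$; the paper's own Lemma \ref{Lemma 2} exhibits exactly this behavior with of order $q/(2m)$ consecutive bounces, and nothing restricts bounded orbits from doing the same. Since the final counting step (``$O(1)$ values of $H$ times $q$ values of $j$ gives period $O(q)$'') rests entirely on that confinement, the argument does not close.

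Two further points to keep in mind if you pursue this. The conjecture concerns \eqref{eq:f with r} for general $p,a,b,q$, while your invariant is written only for $p=1$, $y_0=0$; the analogous quantity in general is $r-p(am+b\,m(m-1)/2)$ along a lift of the $j$-motion, and the same drift problem persists. And your alternative suggestion, studying the first-return map to the bottleneck section, has a structural difficulty: for even $bq$ Theorem \ref{thm: periodic for q even} shows no orbit crosses that level at all, so the return map decouples into the two half-cylinders and gives no control on how long an orbit wanders within one half; boundedness of the vertical excursion is not the issue (it is at most $q$ automatically on the finite lattice), the issue is bounding the number of lattice points a single periodic orbit can visit, which is precisely what remains open.
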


Looking at the portrait of the escaping trajectory (Fig. \ref{fig: wandering}) 
one can notice well-defined lacunae corresponding to the levels 
$j=q/(2n+1)$.  These lacunae represent the islands of stability around the 
corresponding periodic points for the transformation $f$.

\begin{figure}[hbt]
	\centering \includegraphics[width=0.4\textwidth]{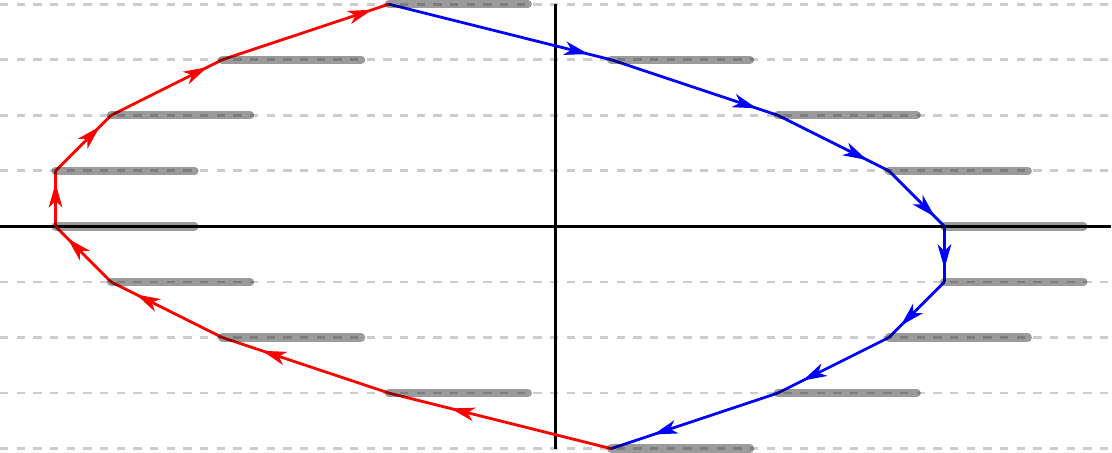}
	\caption{ Lacuna near the level $j=0$. Trajectories cover 
		$O(q)$ 
		distance in horizontal direction in $O(\sqrt{q})$ steps.} 
	\label{fig:lacunae}
\end{figure}

However, these islands  do not exhaust the whole phase space since the 
every island consists of $O(q)$ bounded trajectories while every such 
trajectory has period of order $O(q^{1/2})$ (see Fig \ref{fig:lacunae}). 
Nevertheless we observe that 
\begin{conjecture}
	Distributions of large periods of the periodic trajectories for even values 
	of $q$ coincide.
\end{conjecture}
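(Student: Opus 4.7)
\emph{Interpretation and strategy.} I would read the conjecture as asserting that the empirical distribution $\mu_q$ on the set of scaled orbit periods---where each periodic orbit of $f$ on $L_e$ of length $T$ contributes mass $T/q^2$ to the point $T/q$, so that $\mu_q$ encodes the rescaled Young diagram of Fig.~\ref{fig:young_4k+2}---converges along each of the two subsequences $q = 4k$ and $q = 4k+2$, when restricted to the set of large periods $[c,\infty)$ for any $c>0$, to the same limit $\mu_\infty$. The plan is to exhibit a common scaling limit of the dynamics and to transfer period statistics through it.

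\emph{Scaling limit of the dynamics.} I would begin by rescaling coordinates via $(\xi,\eta) = (r/q,j/q) \in [0,1)^2$, under which \eqref{eq:f with r} takes the form $\xi' = \xi + \eta \pmod 1$, $\eta' = \eta + q^{-1}\sgn(\xi' - 1/2)$. On time scales $O(q)$, orbits approximately follow level sets of the twist $\xi \mapsto \xi + \eta$, while the $\pm q^{-1}$ drift produces slow motion across levels on time scales $O(q^2)$, matching Conjecture \ref{con:asymptotic}. The only structural differences between the $q=4k$ and $q=4k+2$ cases are the $1/(4bq)$ offset between the lattices $L_e$ and $L_o$ and the parity correction $\delta$ in \eqref{eq:f with r}; both are of order $1/q$ and should vanish in the limit. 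I would then set up a first-return map to a horizontal cross-section (e.g., $\eta$ near the bottleneck level $1/2$) and try to show that the rescaled return time converges in distribution to the same limit along both even subsequences. The combinatorics of crossings near the bottleneck, constrained by Lemmas \ref{Lemma 1}--\ref{Lemma 2} together with the central symmetry of Lemma \ref{lm:symmetry}, should allow one to match long orbits for $q=4k$ with those for $q=4k+2$ up to an $o(1)$ error in $\mu_q$.

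\emph{Main obstacle.} The hard step is to promote the heuristic limit, and its independence of $q \pmod 4$, into a rigorous statement. I anticipate two difficulties. First, one must convert the lower bounds of Lemmas \ref{Lemma 1}--\ref{Lemma 2} into matching upper bounds on the time orbits linger near each level, which is essential for controlling the upper tail of $\mu_q$ rather than just confirming that large periods exist. Second, one must establish tightness of $\{\mu_q\}$ along even $q$, i.e.\ rule out that some finer arithmetic feature of $q$---beyond its residue mod $4$---concentrates orbit mass at an anomalous scale. Both difficulties stem from the sensitivity of the dynamics to the discontinuity $\xi = 1/2$ and lie genuinely beyond what Theorem \ref{Theorem 1} required; the same obstruction appears to be what keeps Conjecture \ref{con:asymptotic} itself out of reach.
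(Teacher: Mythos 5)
The statement you are addressing is not proved in the paper at all: it is one of the numerically motivated conjectures, supported only by the observed similarity of the rescaled Young diagrams (Fig.~\ref{fig:young_4k+2}), so there is no proof of record to compare against. Your submission, as you yourself say in the ``main obstacle'' paragraph, is a research program rather than a proof, and the gaps you flag are exactly the substantive ones: you never establish that the rescaled period measures converge at all, nor that the limit is insensitive to $q \bmod 4$, nor any upper bound on periods (Lemmas~\ref{Lemma 1}--\ref{Lemma 2} give only lower bounds, and they concern the escaping orbit near the bottleneck for \emph{odd} $q$ with $y_0=0$; for even $q$ there is no escaping orbit, so they do not constrain the bounded-period statistics you need). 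So the proposal cannot be accepted as a proof of the conjecture.

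Beyond the admitted incompleteness, two points of the sketch would fail as written. First, the structural comparison between $q=4k$ and $q=4k+2$ is misstated: with $y_0=0$ one has $b=1$, so for every even $q$ the dynamics lives on the same lattice $L_e$ with $\delta=0$; the $L_e$/$L_o$ offset and the parity correction $\delta$ distinguish even from odd $bq$, not $4k$ from $4k+2$. The actual difference between the two even classes is purely arithmetic, and the paper's own computation shows it does not vanish like $O(1/q)$ in any naive sense: the number of period-$4$ orbits is exactly $2k-1$ for $q=4k$ but is $q$-independent for $q=4k+2$. The conjecture is precisely that such mod-$4$ effects disappear for \emph{large} periods, and your scaling-limit heuristic assumes this rather than derives it. Second, the continuum rescaling $(\xi,\eta)=(r/q,j/q)$ discards exactly the information that determines periods: in the limit the map degenerates to a family of circle rotations with no drift, whereas the bounded periods are integers of size $O(q)$ fixed by the arithmetic of $q$ and by the discontinuity at $x=1/2$; because the map is discontinuous, an $O(1/q)$ change in the data can reassign entire orbits, so continuity-in-the-limit arguments give no control on the orbit decomposition. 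Any honest attack would have to replace the heuristic limit by a direct combinatorial or arithmetic description of the long bounded orbits (e.g.\ via the island structure near the lacunae $j=q/(2n+1)$) that is uniform over even $q$ --- which is an open problem here, on par with Conjecture~\ref{con:asymptotic}.
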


On the other hand for small periods we have observed some differences. It 
turns out that  for 
$q=2\Mod{4}$  number of periodic orbits of small periods does not depend 
on $q$ while for $q=4k$ there are exactly $(2k-1)$ periodic orbits 
of period $4$. Indeed one can easily check by the direct computation that 
	trajectory of every point $(r,k)$, $r\in [2k,3k-1)$ is 
	$4$-periodic.
Combined with the lemma \ref{lm:symmetry}  this observation
provides $2k-2$ points of period $4$. Since the point $(0,0)$ is clearly 
$4$-periodic for any $q$, the total 
number of $4$-periodic orbits equals $2k-1$.
\bibliographystyle{plain}
\bibliography{pinball}
\end{document}